\def\keywordname{{\bfseries Ключевые слова:}}%
\def\keywords#1{\par\addvspace\medskipamount{\rightskip=0pt plus1cm
\def\and{\ifhmode\unskip\nobreak\fi\ $\cdot$ }\noindent\keywordname\enspace\ignorespaces#1\par}}
\DeclareMathOperator*{\argmin}{arg\,min}
\theoremstyle{plain}
\newtheorem{theorem}{Теорема}
\newtheorem{lemma}{Лемма}
\newtheorem{proposition}{Предложение}
\newtheorem{definition}{Определение}
\theoremstyle{definition}
\newtheorem{remark}{Замечание}
\renewcommand{\phi}{\varphi}
\renewcommand{\epsilon}{\varepsilon}
\renewcommand{\leq}{\leqslant}
\renewcommand{\geq}{\geqslant}
\author{
{\bfseries Г.\,В.\,Айвазян\,$^{\it{a},\,*}$,
Ф.\,С.\,Стонякин\,$^{\it{a},\,\it{b},\,**},$} \\
{\bfseries Д.\,А.\,Пасечнюк\,$^{\it{a},\,\it{c},\,\it{***}}$,
М.\,С.\,Алкуса\,$^{\it{a},\,\it{d},\,****}$
А.\,М.\,Райгородский\,$^{\it{a},\,\it{e}, \it{f} \,*****}$}
\\ {\itshape $^{\it{a}}$\,Московский физико-технический институт,}
\\ {\slshape 141701,} {\itshape г. Долгопрудный, Институтский пер.,~9}
\\ {\itshape $^{\it{b}}$\,Крымский федеральный университет им. В.\,И.\,Вернадского,}
\\ {\slshape 295007,} {\itshape г.~Симферополь, проспект академика Вернадского,~4}
\\ {\itshape $^{\it{c}}$\,Исследовательский центр доверенного искусственного интеллекта ИСП РАН,}
\\ {\slshape 109004,} {\itshape г.~Москва, ул. Александра Солженицына,~25}
\\ {\itshape $^{\it{d}}$\,Национальный исследовательский университет <<Высшая школа экономики>>,}
\\ {\slshape 101000,} {\itshape г. Москва, ул. Мясницкая, д.~20}
\\ {\itshape $^{\it{e}}$\,Московский государственный университет им. М. В. Ломоносова,}
\\ {\itshape механико-математический факультет,}
\\ {\slshape 119991,} {\itshape г. Москва, Ленинские горы, д. 1}
\\ {\itshape $^{\it{f}}$\,Кавказский математический центр Адыгейского государственного университета,}
\\ {\slshape 385000,} {\itshape г. Майкоп, ул. Первомайская, 208}
\\ {\itshape *\,e-mail: aivazian.grigory25@yandex.ru, ORCID: 0009-0008-2625-9332}
\\ {\itshape **\,e-mail: fedyor@mail.ru, ORCID: 0000-0002-9250-4438}
\\ {\itshape ***\,e-mail: dmivilensky1@gmail.com, ORCID: 0000-0002-1208-1659}
\\ {\itshape ****\,e-mail: mohammad.alkousa@phystech.edu, ORCID: 0000-0001-5470-0182}
\\ {\itshape *****\,e-mail: raigorodsky@yandex-team.ru, ORCID: 0000-0001-8614-9612}
}
\title{АДАПТИВНЫЙ ВАРИАНТ АЛГОРИТМА ФРАНК--ВУЛЬФА ДЛЯ ЗАДАЧ ВЫПУКЛОЙ ОПТИМИЗАЦИИ}
\date{}
\begin{document}

\maketitle

\begin{abstract}
В данной работе исследовался вариант метода Франк--Вульфа для задач выпуклой оптимизации с адаптивным подбором параметра шага, соответствующего информации о гладкости целевой функции (константа Липшица градиента). Получены теоретические оценки качества выдаваемого методом приближённого решения с использованием адаптивно подобранных параметров $L_k$. На классе задач на выпуклом допустимом множестве с выпуклой целевой функцией гарантируемая скорость сходимости предложенного метода сублинейна. Рассмотрено сужение этого класса задач (целевая функция с условием градиентного доминирования и получена оценка скорости сходимости с использованием адаптивно подбираемых параметров $L_k$). Важной особенностью полученного результата является проработка ситуации, при которой можно гарантировать после завершения итерации уменьшение невязки функции не менее чем в 2 раза. В то же время использование адаптивно подобранных параметров в теоретических оценках позволяет применять метод как для гладких, так и для негладких задач при условии выполнения критерия выхода из итерации. Для гладких задач можно доказать, что теоретические оценки метода гарантированно оптимальны с точностью до умножения на постоянный множитель. Выполнены вычислительные эксперименты, и проведено сравнение с двумя другими алгоритмами, в ходе чего была продемонстрирована эффективность алгоритма для ряда как гладких, так и негладких задач.
\keywords{метод Франк--Вульфа \and условие Липшица градиента \and адаптивный метод \and условие градиентного доминирования}
\end{abstract}

\section*{Введение}\label{sec1_introduction}

Метод типа условного градиента впервые был рассмотрен для квадратичных оптимизационных задач на многогранниках в \cite{Canon1968Frank}. Принципиальное отличие метода условного градиента (Франк--Вульфа) от классических методов градиентного спуска состоит в том, что вспомогательная задача является линейной. Это удобно для допустимых множеств, для которых эффективно решать вспомогательные линейные подзадачи минимизации. В последние годы методы условного градиента получили широкое распространение в различных задачах анализа данных со структурными ограничениями. В этом контексте можно отметить задачу регрессии LASSO, бинарную классификацию методом опорных векторов (SVM) и др. \cite{Bomze2021Frank}.

Обзор известных к настоящему времени результатов по методам типа условного градиента имеется, например, в недавно вышедшей книге \cite{Braun2022Conditional}. В частности, известно, что для выпуклых гладких задач оптимальной оценкой скорости сходимости метода Франк--Вульфа является $O\left(\frac{1}{k}\right)$, а для негладких задач метод может расходиться (см. \cite[Example 1]{Nesterov_Non-smooth}). В данной работе предложен адаптивный вариант метода Франк--Вульфа по аналогии с работами Ю.\,Е.\,Нестерова \cite{Nesterov2015Universal}. Этот метод, по-прежнему, оптимален для класса выпуклых гладких задач. А за счёт адаптивной настройки потенциально может быть применен и к негладким задачам. Мы отправляемся от \cite{backtracking2020line-search}, где для гладких выпуклых задач предложен метод первого порядка с адаптивным подбором параметров, соответствующих константе гладкости $L$ целевой функции. Однако в нашей работе впервые детально описаны результаты о теоретической оценке качества выдаваемого решения с использованием адаптивно подбираемых параметров, включая ситуацию, когда невязка функции уменьшается не менее чем в 2 раза на одной итерации. Более того, полученные результаты о качестве выдаваемого решения с адаптивно подбираемыми параметрами потенциально можно применить и к негладким выпуклым задачам, что численно проиллюстрировано в настоящей работе некоторыми примерами.

Работа состоит из введения и трёх разделов. В первом разделе приводится постановка задачи, а также описывается классический метод Франк--Вульфа. Во втором разделе описывается исследуемый адаптивный вариант алгоритма Франк--Вульфа, а также проводится теоретический анализ его сходимости для некоторых типов допущений о целевой функции и допустимом множестве задачи. В третьем разделе продемонстрированы результаты проведенных вычислительных экспериментов и сделаны выводы о практической эффективности предложенного алгоритма. В частности, рассмотрены задачи Ферма--Торричелли--Штейнера и о наименьшем покрывающем шаре, которые относятся к разряду негладких, а также приведены результаты для ряда гладких задач: минимизация взвешенной суммы квадратов компонент вектора, восстановление матрицы, задача классификаторов методом опорных векторов, а также задача логистической регрессии.

%%%%%%%%%%%%%%%%%%% Section 1
\section{Общая схема алгоритма Франк--Вульфа}

Будем рассматривать следующую задачу:
\begin{equation}\label{eq1}
    \min_{x \in Q} f(x),
\end{equation}
где $Q$~--- выпуклое и компактное (т.~е. ограниченное и замкнутое) подмножество $\mathbb{R}^n$ и $f$~--- выпуклая (суб)дифференцируемая функция и $\nabla f(x)$ --- её (суб)градиент в т. $х \in Q$. Всюду далее $x^*$ обозначает решение задачи \eqref{eq1}, $f^* := f(x^*)$~--- соответствующее оптимальное значение $f$.

\subsection{Классический метод Франк--Вульфа}

Классический метод Франк--Вульфа для минимизации целевой функции $f$ порождает
последовательность допустимых точек ${x_k}$, соответствующую алгоритму \ref{classical_FW}.

\begin{algorithm}[!ht]
\caption{Классический метод Франк--Вульфа (метод условного градиента).}\label{classical_FW}
\begin{algorithmic}[1]
   \REQUIRE Количество итераций $N$, начальная точка $x_{0} \in Q$.
   \FOR{$k=0, 1, \ldots, N-1$}
    \STATE Выбрать некоторое $\alpha_k \in [0, 1]$,
    \STATE $s_k = \arg\min_{x \in Q}\left\{  \nabla f(x_k)^\top x \right\}$,
    \STATE $x_{k+1} = (1-\alpha_k) x_k + \alpha_k s_k$,
    \ENDFOR
\ENSURE $x_N$.
\end{algorithmic}
\end{algorithm}
На $k$-й итерации метод выдаёт точку, минимизирующую cкалярное произведение $g^\top z$ ($g$ --- вектор) на допустимом множестве $Q$ вида
\begin{equation} \label{eq2}
    \operatorname{LMO}_Q (g) \in \argmin_{z \in Q} g^\top z,
\end{equation}
и направление спуска определяется как
\begin{equation} \label{eq3}
     d_k = d_k^{FW}: = s_k-x_k, s_k \in \operatorname{LMO}_Q (\nabla f (x_k)).
\end{equation}
В частности, обновление в п.~4 листинга алгоритма \ref{classical_FW} может быть записано в виде
\begin{equation*} %\label{eq4}
     x_{k + 1} = x_k + \alpha_k (s_k-x_k) = x_k + \alpha_k d_k.
\end{equation*}

В дальнейшем нам понадобится следующее определение.

\begin{definition}
Дифференцируемая функция $f: Q \longrightarrow \mathbb{R}$ называется $L$-гладкой ($L > 0$) относительно некоторой нормы $\|\cdot\|$, если для любых $x, y \in Q$
\begin{equation}\label{smoothness}
    f(y) - f(x) \leq \langle \nabla f(x), y-x \rangle + \frac{L}{2} \|y-x\|^2.
\end{equation}
\end{definition}
Для выпуклых функций условие \eqref{smoothness} эквивалентно тому, что $f$ имеет $L$-липшицев градиент \cite{Braun2022Conditional}, т.е.
для двойственной к $\|\cdot\|$ нормы $\|\cdot\|_*$ верно $$ \|\nabla f(x) - \nabla f(y) \|_*\ \leq L\|x-y\| \;\; \forall x, y \in Q.$$

\subsection{Известные стратегии выбора параметра шага}
Напомним известные правила выбора параметра шага $\alpha_k \in [0, 1]$ для алгоритма Франк--Вульфа \ref{classical_FW} (см., например, \cite{Bomze2021Frank}).
\begin{enumerate}
     \item Убывающий шаг
     \begin{equation} \label{eq5}
     \alpha_k = \frac{2}{k+2} \quad \forall k \geq 0.
     \end{equation}
     Это правило часто используется.

     \item Точный линейный поиск
     \begin{equation} \label{eq6}
        \begin{aligned}
            \alpha_k = \argmin_{\alpha \in [0, \alpha_k^{max}]} \phi (\alpha),
            \\
            \text{ где }\;\; \phi (\alpha): = f (x_k + \alpha d_k ),
        \end{aligned}
     \end{equation}
     где шаг $ \alpha $ выбирается минимизацией $ \phi $.

     \item Шаг Армихо: при этом подходе итеративно сокращается размер шага, чтобы гарантировать достаточное снижение целевой функции. Он представляет собой хороший способ заменить точный линейный поиск в тех случаях, когда он становится слишком дорогостоящим. На практике устанавливаются параметры $ \delta \in (0,1) $ и $ \gamma \in \left (0, 1/2 \right) $, затем пробуются шаги $\alpha = \delta^m \alpha_k^{max}$,
     \begin{equation} \label{eq7}
     f (x_k + \alpha d_k) \leq f (x_k) + \gamma \alpha \nabla f (x_k)^\top d_k,
     \end{equation}
     и устанавливается $ \alpha_k = \alpha $.

     \item Шаг, который определяется константой Липшица градиента целевой функции.
     \begin{equation} \label{eq8}
     \alpha_k = \alpha_k (L): = \min \left \{-\frac{\nabla f (x_k)^\top d_k}{L \| d_k \|^2}, \alpha_k^{max} \right \},
     \end{equation}
     где $L$~--- константа Липшица $\nabla f$. Размер соответствующего шага может рассматриваться как результат минимизации квадратичной модели $ m_k (\cdot\,; L)$, переоценивающей $f$ вдоль прямой $x_k + \alpha d_k $,
     \begin{equation*} \label{eq9}
        \begin{aligned}
            m_k (\alpha; L) & = f (x_k) + \alpha \nabla f (x_k)^\top d_k + \frac{L \alpha^2}{2} \| d_k \|^2
            \\& \geq f (x_k + \alpha d_k),
        \end{aligned}
     \end{equation*}
     где последнее неравенство следует из стандартной леммы о спуске. Интерес такого выбора шага для метода Франк--Вульфа заключается, в частности в возможности предложить достаточные условия убывания на итерации невязки по функции не менее чем в 2 раза.
\end{enumerate}

%%%%%%%%%%%%%%%%%%%%% Section 2
\section{Теоретический анализ адаптивного варианта алгоритма Франк--Вульфа}

\subsection{Слyчай выпyклых задач}

Отталкиваясь от правила выбора шага \eqref{eq8}, рассмотрим получаем адаптивный вариант алгоритма Франк--Вульфа (см. алгоритм \ref{adaptive_FW} ниже). Его отличительная особенность --- подбор <<локальных>> значений параметров $L_k$ посредством проверки некоторых неравенств на итерациях метода.

\begin{algorithm}[!ht]
\caption{Адаптивный метод Франк--Вульфа.}\label{adaptive_FW}
\begin{algorithmic}[1]
   \REQUIRE Количество итераций $N$, начальная точка $x_{0} \in Q, L_{-1}>0$.
   \FOR{$k=0, 1, \ldots, N-1$}
    \STATE $L_{k}=\frac{L_{k-1}}{2}$.
    \STATE $s_k = \argmin_{x \in Q}\left\{ \nabla f(x_k)^\top x \right\}$.
    \STATE $d_k = s_k - x_k$.
    \STATE $\theta_k = \min \left \{- \frac{ \nabla f(x_k)^\top d_k }{L_k \|d_k\|^2}, 1 \right \} $.
    \IF{$\theta_k < 1$}
    %\STATE \textbf{if}
    %\textbf{then:}
    \IF{$f(x_k + \theta_k d_k) \leq f(x_k) - \frac{\left( \nabla f(x_k)^\top d_k \right)^2}{2L_k \|d_k\|^2}$. }
    \STATE $\alpha_k := \theta_k$,
    \STATE $x_{k+1} = x_k + \alpha_k d_k$.
    \ELSE
    \STATE $L_{k}=2 L_{k}\text{ и переходим к п.~4}.$
    \ENDIF
    \ELSE%{($\theta_k >1$)} %
    \IF{$f(x_k + d_k) \leq f(x_k) +  \nabla f(x_k)^\top d_k + \frac{L_k}{2}\|d_k\|^2$, }
    \STATE $\alpha_k := 1$,
    \STATE $x_{k+1} = x_k + \alpha_k d_k$.
    \ELSE
    \STATE $L_{k}=2 L_{k}\text{ и переходим к п.~4}.$
    \ENDIF
    \ENDIF
 %\STATE \textbf{end if}
 \ENDFOR
\ENSURE $x_N$.
\end{algorithmic}
\end{algorithm}

\begin{remark}\label{remark_1}
Если $f$ имеет $L$-липшицев градиент и $L_{-1} \leq 2L$, то из \cite[Lemma 1]{Bomze2021Frank} ясно, что $L_k \leq 2L $.
\end{remark}
\begin{remark}\label{remark_2}
Пусть $f$ имеет $L$-липшицев градиент и на шаге $k$ было осуществлено $i_k$ проверок неравенства из п.~7 или п.~14. Тогда при условии $L_{-1} \leq 2L$ для алгоритма \ref{adaptive_FW} в силу замечания \ref{remark_1} имеет место следующий факт:
\begin{align*}
        &i_0 + i_1 +\ldots + i_N =\\
        &\quad= \left(2 + \log_{2}{\frac{L_1}{L_0}}\right)+ \ldots +\left(2 +\log_{2}{\frac{L_N}{L_{N-1}}}\right)
        \\&\quad= 2N + \log_{2}{\left(\frac{L_1}{L_0}\frac{L_2}{L_1} \cdots \frac{L_N}{L_{N-1}}\right)}
        \\&\quad= 2N + \log_{2}{\frac{L_N}{L_0}}   \leq 2N + \log_2{\frac{2L}{L_0}} = O(N).
\end{align*}
Таким образом, общее количество проверок неравенства из п.~7 или п.~14 после $N$ итераций алгоритма \ref{adaptive_FW} составляет $O(N)$.
\end{remark}

Приведем теоретические результаты о сходимости алгоритма \ref{adaptive_FW}.

Рассмотрим следующий параметр (см., например, \cite{Bomze2021Frank}), который часто рассматривают в качестве меры сходимости, известный как зазор двойственности:
\begin{equation}\label{eq10}
    G(x) = \max_{s \in Q} \nabla f(x)^\top(x-s).
\end{equation}

Этот параметр всегда неотрицателен и равен нулю только в стационарных точках $f$. Если $f$~--- выпуклая функция, то, используя, что $\nabla f(x)$~--- это субградиент, мы достигаем следующего:
\begin{equation}\label{eq11}
    G(x) \geq -\nabla f(x)^\top(x-x^*) \geq f(x) - f^*.
\end{equation}
Поэтому $G(x)$~--- это верхняя граница зазора оптимальности в точке $x$. Отметим, что $G(x)$~--- это частный случай зазора двойственности Фенхеля.

Докажем вспомогательную лемму, приводящую к достаточным условиям убывания на итерации алгоритма невязки по функции не менее чем в 2 раза.
\begin{lemma}\label{lemma_1}
Если $f$~--- выпуклая функция и в алгоритме \ref{adaptive_FW} $\alpha_k = 1$ и $d_k = d_k^{FW}$, то при любых $k \geq 0$
\begin{equation}\label{eq12}
    f(x_{k+1}) - f^* \leq \frac{1}{2} \min \left(L_k \|d_k\|^2, f(x_k) - f^*\right).
\end{equation}
\end{lemma}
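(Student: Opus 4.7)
The plan is to read carefully what the condition ``$\alpha_k=1$ and $d_k=d_k^{FW}$'' gives us inside Algorithm \ref{adaptive_FW}, and then combine two uses of convexity with the descent inequality accepted at step~14.

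First, I would observe that $\alpha_k=1$ means the ``else'' branch at step~6 was taken, i.e.\ $\theta_k=1$. By the definition of $\theta_k$ at step~5, this is equivalent to
\[
-\frac{\nabla f(x_k)^\top d_k}{L_k\|d_k\|^2}\ge 1,
\qquad\text{i.e.}\qquad
L_k\|d_k\|^2\le -\nabla f(x_k)^\top d_k.
\]
Moreover, since $\alpha_k$ was actually assigned the value $1$, the test at step~14 passed, which yields the key descent inequality
\[
f(x_{k+1})\le f(x_k)+\nabla f(x_k)^\top d_k+\tfrac{L_k}{2}\|d_k\|^2.
\]

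Next I would derive the two bounds inside the $\min$. For the bound $\tfrac12 L_k\|d_k\|^2$, I would plug convexity directly into the descent inequality: since $s_k$ minimizes $\nabla f(x_k)^\top x$ over $Q$ and $x^*\in Q$, we have $\nabla f(x_k)^\top d_k=\nabla f(x_k)^\top(s_k-x_k)\le \nabla f(x_k)^\top(x^*-x_k)\le f^*-f(x_k)$, whence
\[
f(x_{k+1})-f^*\le \bigl(f(x_k)-f^*\bigr)+\bigl(f^*-f(x_k)\bigr)+\tfrac{L_k}{2}\|d_k\|^2=\tfrac{1}{2}L_k\|d_k\|^2.
\]
For the bound $\tfrac12(f(x_k)-f^*)$, I would instead use $L_k\|d_k\|^2\le -\nabla f(x_k)^\top d_k$ to estimate the quadratic term, getting
\[
f(x_{k+1})\le f(x_k)+\nabla f(x_k)^\top d_k-\tfrac12\nabla f(x_k)^\top d_k=f(x_k)+\tfrac12\nabla f(x_k)^\top d_k,
\]
and then apply the same convexity bound $\nabla f(x_k)^\top d_k\le f^*-f(x_k)$ to conclude $f(x_{k+1})-f^*\le \tfrac12(f(x_k)-f^*)$.

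Taking the minimum of the two estimates gives \eqref{eq12}. There is no real obstacle here: the only subtlety worth being careful about is not confusing $\theta_k=1$ with an unconstrained choice of step, and remembering that reaching the line $\alpha_k:=1$ in Algorithm~\ref{adaptive_FW} already certifies both the inequality $L_k\|d_k\|^2\le -\nabla f(x_k)^\top d_k$ (from the $\min$ in step~5 being attained at $1$) and the smoothness-type inequality (from the acceptance test at step~14). Everything else is a two-line manipulation using the Frank--Wolfe optimality of $s_k$ and convexity of $f$.
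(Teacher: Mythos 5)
Your proof is correct and follows essentially the same route as the paper: extract $L_k\|d_k\|^2\le -\nabla f(x_k)^\top d_k$ from $\theta_k=1$, use the accepted descent inequality from step~14, and bound $\nabla f(x_k)^\top d_k\le f^*-f(x_k)$ via the Frank--Wolfe optimality of $s_k$ and convexity. The only cosmetic difference is that the paper phrases the last step through the duality gap $G(x_k)=-\nabla f(x_k)^\top d_k$ and the inequality $G(x_k)\ge f(x_k)-f^*$, which is exactly the bound you use directly.
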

\begin{proof}
Если $\alpha_k = 1$, тогда по определениям \eqref{eq3} и \eqref{eq10} верно, что
\begin{equation}\label{eq13}
    G(x_k) = -\nabla f(x_k)^\top d_k \geq L_k \|d_k\|^2,
\end{equation}
где последнее неравенство верно в связи с предположением, что $\alpha_k = 1$, а значит, $\theta_k \geq 1$. Из листинга алгоритма \ref{adaptive_FW} следует, что при $\alpha_k = 1$ верно следующее:
\begin{equation}\label{eq14}
    \begin{aligned}
        & f(x_{k+1}) - f^* = f(x_k+d_k) - f^* \leq
        \\& \;\; \leq f(x_k) - f^* + \nabla f(x_k)^\top d_k + \frac{L_k}{2} \|d_k\|^2.
    \end{aligned}
\end{equation}
Учитывая определение $d_k$ и выпуклость $f$, получаем
\begin{equation*}
    \begin{aligned}
    &f(x_k) - f^* + \nabla f(x_k)^\top d_k \leq
    \\&\;\; \leq f(x_k) - f^* + \nabla f(x_k)^\top(x^* - x_k) \leq 0.
    \end{aligned}
\end{equation*}
Отсюда, ввиду \eqref{eq14}, верно, что
$$
    f(x_{k+1}) - f^* \leq \frac{L_k}{2} \|d_k\|^2.
$$
На базе неравенства \eqref{eq14} имеем
\begin{equation}\label{eq15}
    \begin{aligned}
        & f(x_k) - f^* + \nabla f(x_k)^\top d_k + \frac{L_k}{2} ||d_k||^2 \leq
        \\& \leq f(x_k) - f^* - \frac{1}{2}G(x_k) \leq \frac{f(x_k)-f^*}{2},
    \end{aligned}
\end{equation}
где в первом неравенстве было использовано \eqref{eq13}, а во втором  $G(x_k) \geq f(x_k) - f^*$.
\end{proof}

Сформулируем теперь основной результат данного раздела.

\begin{theorem}\label{MainThm}
Пусть $f$~--- выпуклая функция. Тогда для алгоритма \ref{adaptive_FW}, при любых $k \geq 1$ верно
\begin{equation}\label{eq16}
    f(x_k) - f^* \leq \frac{2 D^2 \max_{j \in \overline{0,k-1}} L_j}{k+2}.
\end{equation}
Если $\alpha_k = 1$, то
\begin{equation}\label{eq17}
    f(x_k) - f^* \leq \frac{ f(x_{k - 1}) - f^*}{2}.
\end{equation}
\end{theorem}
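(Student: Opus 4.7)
План состоит в том, чтобы установить сублинейную оценку \eqref{eq16} индукцией по $k$, а соотношение \eqref{eq17} получить как прямое следствие леммы \ref{lemma_1} сдвигом индекса. Введём обозначение $h_k := f(x_k) - f^*$ и будем считать, что $D$~--- диаметр множества $Q$, так что $\|d_k\| = \|s_k - x_k\| \leq D$ для всех $k$.

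Ключевое наблюдение состоит в том, что в обеих ветвях алгоритма \ref{adaptive_FW} принимаемый шаг $\alpha_k$ является минимизатором на отрезке $[0,1]$ квадратичной модели
\[
m_k(\alpha; L_k) := f(x_k) + \alpha \nabla f(x_k)^\top d_k + \frac{L_k \alpha^2}{2}\|d_k\|^2,
\]
а прямая подстановка показывает, что критерии выхода в п.~7 и п.~14 представляют собой в точности неравенства $f(x_{k+1}) \leq m_k(\alpha_k; L_k)$ (действительно, подстановка $\alpha=\theta_k$ даёт правую часть неравенства в п.~7, а $\alpha=1$~--- правую часть неравенства в п.~14). Поскольку $m_k(\cdot; L_k)$~--- выпуклая квадратичная функция, минимизируемая в точке $\alpha_k$ на $[0,1]$, то для любого $\alpha \in [0,1]$ выполнено $f(x_{k+1}) \leq m_k(\alpha; L_k)$.

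Далее используем выпуклость $f$ и выбор $s_k$ по правилу Франк--Вульфа: $\nabla f(x_k)^\top d_k \leq \nabla f(x_k)^\top (x^* - x_k) \leq -h_k$. Подставляя это в предыдущее неравенство и учитывая $\|d_k\|^2 \leq D^2$, приходим к центральной рекуррентной оценке
\[
h_{k+1} \leq (1-\alpha) h_k + \frac{L_k \alpha^2 D^2}{2}, \qquad \forall\, \alpha \in [0,1].
\]
Выбирая $\alpha = \frac{2}{k+2}$, получаем $h_{k+1} \leq \frac{k}{k+2} h_k + \frac{2 L_k D^2}{(k+2)^2}$; проводя индукцию по $k$ с предположением $h_k \leq \frac{2 D^2 \overline{L}_{k-1}}{k+2}$ (где $\overline{L}_{k-1} := \max_{0 \leq j \leq k-1} L_j$) и пользуясь элементарным неравенством $(k+1)(k+3) \leq (k+2)^2$, приходим к искомой оценке \eqref{eq16} для $h_{k+1}$. База индукции при $k=1$ следует из той же рекуррентной оценки при $\alpha=1$, дающей $h_1 \leq L_0 D^2/2 \leq 2 L_0 D^2/3$. Утверждение \eqref{eq17} напрямую эквивалентно выводу леммы \ref{lemma_1} с переобозначением $k-1 \mapsto k$.

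Основная (впрочем, не слишком острая) техническая трудность --- аккуратно объединить обе ветви алгоритма в единое рекуррентное неравенство, т.~е. убедиться, что $\alpha_k$ действительно является минимизатором $m_k(\cdot; L_k)$ на $[0,1]$ и что критерии выхода совпадают с неравенством $f(x_{k+1}) \leq m_k(\alpha_k; L_k)$ по построению. После установления этой эквивалентности остальная часть доказательства сводится к хорошо известной индукции из стандартного анализа метода Франк--Вульфа.
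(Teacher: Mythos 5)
Your proof is correct, but it takes a genuinely different route from the paper's. You unify both branches of Algorithm \ref{adaptive_FW} by observing that the accepted step $\alpha_k$ minimizes the quadratic model $m_k(\cdot;L_k)$ over $[0,1]$ and that the exit tests in steps 7 and 14 are exactly $f(x_{k+1})\le m_k(\alpha_k;L_k)$; this gives the classical Frank--Wolfe recursion $h_{k+1}\le(1-\alpha)h_k+\tfrac{L_k\alpha^2D^2}{2}$ for every $\alpha\in[0,1]$, into which you plug $\alpha=\tfrac{2}{k+2}$ and run the standard induction via $(k+1)(k+3)\le(k+2)^2$. The paper instead argues by cases on the accepted step: for $\alpha_k=1$ it invokes the halving property of Lemma \ref{lemma_1}, and for $\alpha_k<1$ it uses the exit criterion together with $-\nabla f(x_k)^\top d_k=G(x_k)\ge h_k$ to get the contraction $h_{k+1}\le h_k\bigl(1-\tfrac{h_k}{2L_kD^2}\bigr)$, then splits into two subcases depending on whether $\tfrac{h_k}{2L_kD^2}$ exceeds $\tfrac{1}{k+3}$. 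Your argument is cleaner and self-contained for \eqref{eq16} (in particular your base case $h_1\le L_0D^2/2$ via $\alpha=1$ is tidier than the paper's treatment of the $\alpha_0<1$ case), whereas the paper's per-iteration contraction factor $1-\tfrac{h_k}{2L_kD^2}$ is reused later in Remarks \ref{Rm0} and \ref{Rm2}, so its case analysis buys additional information beyond the theorem itself. Both treat \eqref{eq17} identically, as an immediate consequence of Lemma \ref{lemma_1}.
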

\begin{proof}
 Докажем \eqref{eq16} индукцией по $k$. Для $k = 0$ и $\alpha_0 = 1$ по лемме \ref{lemma_1} верно, что
$$
    f(x_1) - f^* \leq \frac{L_0 \|d_0\|^2}{2} \leq \frac{L_0 D^2}{2}.
$$
Если $\alpha_0 < 1$, то
$$
f(x_0) - f^* \leq G(x_0) \leq L_0 \|d_0\|^2 \leq L_0 D^2.
$$
Таким образом, базис индукции верен. Из \eqref{eq12} очевидно, что для всех $k$, таких, что $\alpha_k = 1$, шаг индукции выполняется.
Если $\alpha_k < 1$, то
\begin{equation*}
    \begin{aligned}
       f(x_{k+1}) - f^* & \leq f(x_k) - f^* - \frac{1}{2L_k} (\nabla f(x_k)^\top \hat{d_k})^2
       \\& \leq f(x_k) - f^* - \frac{(\nabla f(x_k)^\top d_k)^2}{2L_k D^2}
       \\&
       \leq f(x_k) - f^* - \frac{(f(x_k) - f^*)^2}{2L_k D^2}
       \\& = (f(x_k) - f^*) \left(1 - \frac{f(x_k) - f^*}{2 L_k D^2}\right).
    \end{aligned}
\end{equation*}
Если $\frac{f(x_k) - f^*}{2L_k D^2} \leq \frac{1}{k+3}$, то $f(x_k)-f^* \leq \frac{2L_k D^2}{k+3}$. Отсюда, с учётом $1 - \frac{(f(x_k) - f^*)}{2L_k D^2} \leq 1$, получим оценку
$$
    f(x_{k+1}) - f^* \leq \frac{2L_k D^2}{k+3} \leq \frac{2 D^2\max_{j \in \overline{0,k}} L_j }{k+3}.
$$
Если $\frac{f(x_k) - f^*}{2L_k D^2} > \frac{1}{k+3}$, то $f(x_k)-f^* > \frac{2L_k D^2}{k+3}$, откуда
\begin{equation*}
    \begin{aligned}
        f(x_{k+1}) - f^* &\leq \left(f(x_k)-f^*\right) \left(1-\frac{1}{k+3}\right)
        \\&
        \leq \left(\frac{2D^2 \max_{j \in \overline{0,k-1}} L_j }{k+2}\right)\left(\frac{k+2}{k+3}\right)
        \\& = \frac{2D^2 \max_{j \in \overline{0,k-1}} L_j}{k+3}
        \\& \leq \frac{2D^2 \max_{j \in \overline{0,k}} L_j}{k+3}.
    \end{aligned}
\end{equation*}

Неравенство \eqref{eq17} непосредственно вытекает из \eqref{eq12}.
\end{proof}

\begin{remark}\label{Rm0}
Если при некотором фиксированном $\Delta > 0$ верно $f(x_j)-f^* \geq \Delta \; \forall j \in \overline{0, k-1}$, то
\begin{equation}\label{eq18}
    f(x_k) - f^* \leq (f(x_0)-f^*) \prod\limits_{j=0}^{k-1} \phi_j,
\end{equation}
где
\begin{equation*}
    \phi_i =
    \begin{cases}
       \frac{1}{2}, & \text{ если } \alpha_i = 1,
       \\
       1-\frac{\Delta}{2L_iD^2}, & \text{ если } \alpha_i < 1.
    \end{cases}
\end{equation*}
\end{remark}

\begin{proof}
Неравенство \eqref{eq18} получается также индукцией по $k$. Для $k=1$ неравенство тривиально выполняется. Таким образом, базис индукции верен.

При $\alpha_k = 1$ шаг индукции очевидным образом следует из \eqref{eq17}. Если же $\alpha_k < 1$, то, применяя индуктивное допущение (соответствующую оценку для $ f(x_k) - f^*$), получим
\begin{equation*}
    \begin{aligned}
        f(x_{k+1}) & - f^*  \leq    \left(f(x_k)-f^*\right) \left (1-\frac{f(x_k)-f^*}{2L_k\|d_k\|^2} \right)
        \\&
        \leq \left(f(x_k)-f^*\right) \left (1-\frac{\Delta}{2L_k\|d_k\|^2} \right)
        \\&
        \leq (f(x_1)-f^*) \prod\limits_{j=0}^{k-1} \phi_j \left (1-\frac{\Delta}{2L_k\|d_k\|^2} \right)
        \\&
        \leq (f(x_1)-f^*) \prod\limits_{j=0}^{k} \phi_j.
    \end{aligned}
\end{equation*}
Таким образом, шаг индукции выполняется, что завершает доказательство \eqref{eq18}.
\end{proof}

Отметим ещё пару замечаний, описывающих потенциально возможные ситуации гарантированного уменьшения невязки по функции не менее чем в 2 раза.
\begin{remark}\label{Rm1}
Из \eqref{eq16} и \eqref{eq17} следует тот факт, что если последние $m$ итераций ($m < n$) верно, что $\alpha_k = 1$, то
$$
    f(x_{n}) - f^* \leq \frac{1}{2^{m-1}}\frac{D^2 \max_{j \in \overline{0,n-m-1}} L_j }{n-m+2}.
$$
\end{remark}

\begin{remark}\label{Rm2}
Также примечательно отметить факт, что если на $t\leq n$ итерациях алгоритма \ref{adaptive_FW} верно $\alpha_k = 1$, то
$$
    f(x_{n})-f^* \leq  \frac{f(x_0)-f^*}{2^t}.
$$
\end{remark}
Это следует из \eqref{eq17} и того, что при $\alpha_k < 1$
\begin{equation*}
    \begin{aligned}
        f(x_{k+1}) - f^* &\leq \left(f(x_{k}) - f^*\right)\left(1 - \frac{f(x_{k}) - f^*}{2L_{k} D^2}\right)
        \\& \leq f(x_{k}) - f^*.
    \end{aligned}
\end{equation*}
Заметим, что полученные оценки в \eqref{eq17} и примечаниях \ref{Rm1}, \ref{Rm2} являются отличительными особенностями предлагаемого в нашей работе подхода по сравнению с известными вариантами методов Франк--Вульфа \cite{Braun2022Conditional} при других подходах к выбору шагов.

Если предыдущие результаты для выпуклых задач дают только гарантии сублинейного скорости сходимости для выпуклых задач, то следующий подраздел посвящён более узкому классу задач, для которых можно установить сходимость предлагаемой адаптивной модификации метода Франк--Вульфа со скоростью геометрической прогрессии.

\subsection{Слyчай выпуклой целевой функций, удовлетворяющей условию градиентного доминирования}

Допустим, что помимо выпуклости целевая функция удовлетворяет условию градиентного доминирования Поляка-Лоясиевича. Оно введено Б.\,Т.\,Поляком в 1963 году в качестве условия (не требующего сильной выпуклости) но достаточного для доказательства глобальной линейной скорости сходимости градиентного спуска \cite{Polyak_condition} и является частным случаем неравенства С.\,Лоясиевича \cite{Lojasiewichz_inequality}. Напомним соответствующее понятие.

\begin{definition}
Дифференцируемая функция $f$ называется $c$-градиентно доминируемой для некоторой константы $c > 0$, если для всех $x$ и $y$ из ее области определения верно следующее неравенство:
$$
\frac{f(x) - f(y)}{c^2} \leq \|\nabla f(x)\|_*^2.
$$
\end{definition}

Среди примеров таких функций можно отметить сильно выпуклые функции, а также задачу логистической регрессии на любом компакте \cite{PL_Karimi}. Для дальнейшего изложения нам понадобится ещё одно определение.

\begin{definition}
Зазором оптимальности функции $ f: Q \longrightarrow \mathbb{R}$ в точке $x$ называется
$$
h(x) \overset{\mathrm{def}}{=} \max_{y \in Q} f(x) - f(y) = f(x) - f^*.
$$
\end{definition}

Далее, приведем утверждение, позволяющее доказать основную теорему параграфа.

\begin{proposition}\label{prop1}
Пусть $Q \subset \mathbb{R}^n$~--- компактное выпуклое множество диаметра $D$ и $f$~--- дифференцируемая выпуклая функция. Если существует радиус $r>0$, такой, что $B(x^*, r) \subset Q$, где $x^*$~--- точка минимума $f$, то для всех $x \in Q$ верно
$$
\langle \nabla f(x), x - v \rangle \geq r\| \nabla f(x) \|_* \geq \frac{r \langle \nabla f(x), x - x^* \rangle}{\|x-x^*\|},
$$
где $v = \arg\max\limits_{u \in Q} \langle \nabla f(x), x-u \rangle.$
\end{proposition}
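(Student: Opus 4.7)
My plan is to prove the two inequalities of the displayed chain separately, since they rest on different tools: the right-hand inequality is a routine application of the duality bound between $\|\cdot\|$ and $\|\cdot\|_*$, whereas the left-hand one genuinely exploits the hypothesis $B(x^*,r) \subset Q$.

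I would begin with the easier right-hand estimate. Assuming $x \neq x^*$ (when $x = x^*$ both sides of the full chain vanish and the statement is trivial), the inequality $r\|\nabla f(x)\|_* \geq r\langle \nabla f(x), x - x^*\rangle / \|x - x^*\|$ reduces, after cancelling $r$ and multiplying through by $\|x - x^*\|$, to $\langle \nabla f(x), x - x^*\rangle \leq \|\nabla f(x)\|_* \cdot \|x - x^*\|$, which is exactly the Hölder-type bound built into the definition of the dual norm.

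For the left-hand inequality the key idea is to test the maximizing property of $v$ against a carefully chosen competitor lying in the inscribed ball. I would pick a unit vector $w \in \mathbb{R}^n$, $\|w\| = 1$, satisfying $\langle \nabla f(x), w\rangle = -\|\nabla f(x)\|_*$ (such a $w$ exists by compactness of the unit sphere in finite dimensions: it is a maximizer of the linear functional $z \mapsto \langle -\nabla f(x), z\rangle$ on the closed unit ball). Then $u := x^* + rw$ lies in $B(x^*, r) \subset Q$, and the optimality of $v$ gives
$$\langle \nabla f(x), x - v\rangle \geq \langle \nabla f(x), x - u\rangle = \langle \nabla f(x), x - x^*\rangle + r\|\nabla f(x)\|_*.$$
Finally, convexity of $f$ yields $\langle \nabla f(x), x - x^*\rangle \geq f(x) - f^* \geq 0$, so the right-hand side is already at least $r\|\nabla f(x)\|_*$, proving the left-hand inequality.

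The hard part, to the extent there is one, is purely the construction of the witness vector $w$ realizing the dual-norm supremum. In a general normed space this would need some form of Hahn--Banach, but in $\mathbb{R}^n$ it is automatic by compactness, so I do not expect any real obstacle; once $w$ is in hand, the proof is just a single linear estimate plus the first-order convexity inequality.
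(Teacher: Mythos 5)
Your proof is correct: constructing the dual-norm witness $w$ with $\langle \nabla f(x), w\rangle = -\|\nabla f(x)\|_*$, testing the optimality of $v$ against $u = x^* + rw \in B(x^*,r) \subset Q$, and then discarding the nonnegative term $\langle \nabla f(x), x - x^*\rangle \geq f(x) - f^* \geq 0$ via convexity gives the left inequality, while the right one is just the H\"older bound for the dual norm. The paper itself does not supply a proof but defers to \cite{Braun2022Conditional}, and your argument is essentially the standard ``scaling inequality'' proof given there (stated there for the Euclidean norm, where $w = -\nabla f(x)/\|\nabla f(x)\|$; your version with a general norm and its dual is the natural extension and introduces no gap).
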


Доказательство этого утверждения можно найти в \cite{Braun2022Conditional}.

\begin{theorem}
Пусть $Q$~--- компактное выпуклое множество диаметра $D$, а $f$~--- выпуклая $c$-градиентно доминируемая функция. Пусть также существует точка минимума $f$ $x^* \in Int(Q)$ во внутренности $Q$, т.~е. существует радиус $r > 0$, такой, что $B(x^*, r) \subset Q$. Тогда  для алгоритма \ref{adaptive_FW} при любых $k \geq 1$ верно
\begin{equation}\label{estimate_alg2}
f(x_k) - f^* \leq  (f(x_0)-f^*) \prod\limits_{i=1}^k \phi_i,
\end{equation}
где
\begin{equation*}
    \phi_i =
    \begin{cases}
       \frac{1}{2}, & \text{ если } \alpha_i = 1,
       \\
       1 - \frac{r^2}{L_i c^2 D^2}, & \text{ если } \alpha_i < 1.
    \end{cases}
\end{equation*}
\end{theorem}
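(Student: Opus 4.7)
План доказательства. Утверждение доказывается индукцией по $k$ по той же схеме, что и в замечании \ref{Rm0}. Базис индукции ($k=1$) проверяется непосредственно, разобрав два случая значения $\alpha_0$ (см. ниже). Для индуктивного перехода рассматриваются два случая в зависимости от значения $\alpha_k$.

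Если $\alpha_k = 1$, то искомый множитель $\phi_{k+1} = \tfrac{1}{2}$ следует непосредственно из неравенства \eqref{eq17} леммы \ref{lemma_1}, и вместе с индуктивным предположением для $f(x_k) - f^*$ это сразу даёт нужную оценку.

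Случай $\alpha_k < 1$ содержит всю техническую работу. Отправной точкой служит гарантия
$$
f(x_{k+1}) - f^* \leq f(x_k) - f^* - \frac{(\nabla f(x_k)^\top d_k)^2}{2L_k \|d_k\|^2},
$$
обеспечиваемая критерием в пункте~7 алгоритма \ref{adaptive_FW}. Основная задача --- оценить вычитаемое слагаемое снизу множителем вида $\mathrm{const} \cdot (f(x_k) - f^*)$. План состоит в следующем: (i) воспользоваться очевидной оценкой $\|d_k\| \leq D$; (ii) применить предложение \ref{prop1} --- именно на этом шаге задействуется условие $B(x^*, r) \subset Q$ --- для вывода неравенства $-\nabla f(x_k)^\top d_k = G(x_k) \geq r \|\nabla f(x_k)\|_*$; (iii) применить условие градиентного доминирования при $y = x^*$, дающее $\|\nabla f(x_k)\|_*^2 \geq (f(x_k) - f^*)/c^2$. Подстановка этих оценок (с учётом квадрата в числителе вычитаемого слагаемого) приводит к неравенству
$$
f(x_{k+1}) - f^* \leq (f(x_k) - f^*)\left(1 - \frac{r^2}{L_k c^2 D^2}\right),
$$
что в комбинации с индуктивным предположением даёт \eqref{estimate_alg2}.

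Основным препятствием здесь является корректное применение предложения \ref{prop1} для перехода от зазора двойственности $G(x_k)$ к норме градиента $\|\nabla f(x_k)\|_*$. Именно условие $B(x^*, r) \subset Q$ обеспечивает существенно нетривиальную линейную связь между этими двумя величинами, что в сочетании с условием градиентного доминирования (связывающим норму градиента с невязкой по функции) позволяет совершить качественный переход от сублинейной скорости сходимости (см. теорему \ref{MainThm}) к геометрической. Стоит также отметить, что схема доказательства полностью параллельна замечанию \ref{Rm0}, но роль фиксированного порога $\Delta$ теперь играет переменная величина $(f(x_k) - f^*)\cdot r^2 / (c^2 D^2)$, доступная благодаря предположению внутренности минимума.
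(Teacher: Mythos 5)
Ваше доказательство по существу совпадает с авторским: та же индукция по $k$, случай $\alpha_k=1$ закрывается неравенством \eqref{eq17}, а случай $\alpha_k<1$ --- комбинацией критерия выхода из п.~7 с предложением~\ref{prop1} и условием градиентного доминирования. Заметьте лишь, что описанная вами подстановка на самом деле даёт множитель $1-\frac{r^2}{2L_k c^2 D^2}$ (двойка из знаменателя критерия выхода никуда не исчезает), в точности как в авторском неравенстве \eqref{eq19}; выписанный же вами множитель без двойки соответствует формулировке теоремы, но не самому вычислению --- это расхождение присутствует и в самой статье.
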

\begin{proof}
Рассмотрим случай с $\alpha_k < 1$. Применяя предложение~\ref{prop1} с $x = x_k$ и $v = v_k$  и учитывая, что итерация алгоритма \ref{adaptive_FW} заканчивается тем, что выполняется неравенство из п.~7, для всех $k \geq 0$ получим $h_k - h_{k+1} = f(x_k) - f(x_{k+1}) \geq$
$$\frac{ (\nabla f(x_k)^\top (x_k-v_k))^2}{2L_k \|d_k\|^2} \geq $$
$$\geq  \frac{r^2 \|\nabla f(x_k)\|_*^2}{2L_k \|d_k\|^2} \geq \frac{r^2}{2L_k c^2 D^2} h_k.$$
Отсюда перегруппировкой слагаемых получим:
\begin{equation}\label{eq19}
    h_{k+1} \leq h_k \left(1 - \frac{r^2}{2L_k c^2 D^2}\right).
\end{equation}
Утверждение теоремы теперь получается комбинацией результатов \eqref{eq17} и \eqref{eq19} и очевидной индукции по $k$.
\end{proof}

\section{Численные эксперименты}

Для демонстрации производительности предложенного алгоритма \ref {adaptive_FW} были проведены численные эксперименты для ряда задач, как гладких, так и негладких. Как правило, метод условного градиента может не сходиться для негладких задач. Однако благодаря адаптивному подбору параметров $L_k$ в некоторых случаях алгоритм \ref{adaptive_FW} работает, и тогда становятся применимы результаты приведенных в данной работе теоретических результатов. Все вычисления были реализованы на Python 3.10.12 с помощью сервиса Google Colaboratory. Визуализация выполнялась с помощью библиотеки для визуализации данных Matplotlib. Для всех графиков масштаб осей был сделан логарифмическим.

1) Первый из рассматриваемых примеров естественно связан с хорошо известной задачей Ферма--Торричелли--Штейнера. Для этой задачи целевая функция имеет следующий вид:
\begin{equation}\label{obj_FTS}
    f(x):=\sum_{k=1}^N\left\|x-A_k\right\|_2.
\end{equation}

В качестве допустимого множества $Q$ для этой задачи используются следующие множества:
\begin{itemize}
    \item  $\ell_{\infty}$-шар с центром в $0 \in \mathbb{R}^n$ и радиусом $r > 0$, %$1$
    то есть
    $$B_\infty(r) := \{x \in \mathbb{R}^n: \|x\|_{\infty} = \max_{1 \leq i \leq n}|x_i| \leq r  \};$$
    \item $\ell_p$-шар ($p \in [1, \infty)$) с центром в $0 \in \mathbb{R}^n$ и радиусом  $r>0$, то есть
    $$B_p(r) := \left\{ x \in \mathbb{R}^n: \|x\|_p^p = \sum_{i=1}^{n}|x_i|^p \leq r^p \right\}.$$
\end{itemize}

Точки $A_k,\;k = 1, \ldots, N$ выбирались случайным образом с нормальным (гауссовским) распределением с центром $0$ и стандартным отклонением, равным $1$. На $\ell_1$-шаре для $n = 1000$ и $r = 500$ адаптивный алгоритм \ref{adaptive_FW} показал лучшую сходимость и сошелся к значению $f = 297.47$, в то время как алгоритм с убывающим шагом сошелся к значению $f = 305.93$. Для $n = 100$ и $r = 10$ оба алгоритма показали примерно одинаковую эффективность. На $\ell_2$-шаре адаптивный алгоритм \ref{adaptive_FW} достигает хорошего качетсва приближённое решение всего за несколько итераций, в то время как алгоритм с убывающим шагом показал гораздо более медленную сходимость. На $\ell_{\infty}$-шаре алгоритмы показали примерно одинаковый результат. Однако можно отметить, что на первых итерациях адаптивный алгоритм сработал лучше (Рис.~\ref{Fig1}).

\begin{figure}[ht]
    \centering
    \includegraphics[width=4cm]{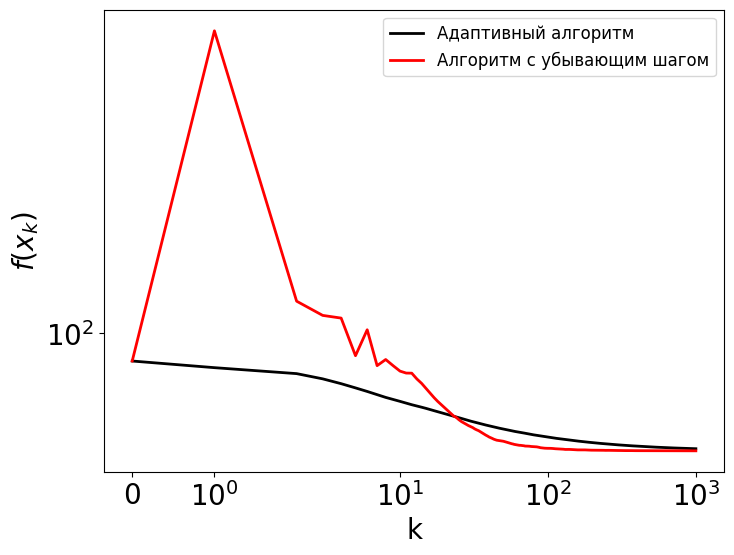}
    \includegraphics[width=4cm]{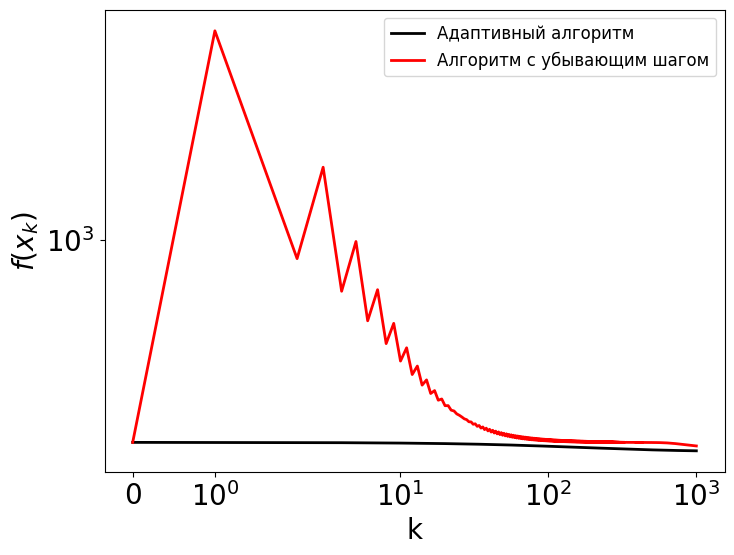}
    \includegraphics[width=4cm]{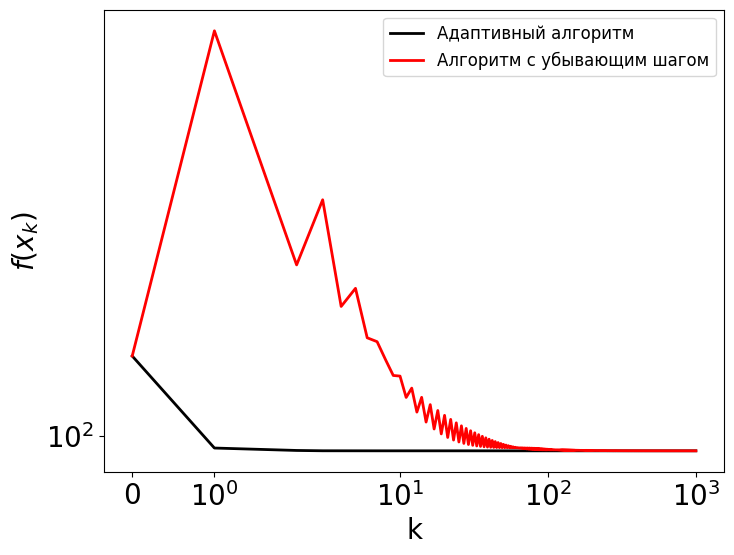}
    \includegraphics[width=4cm]{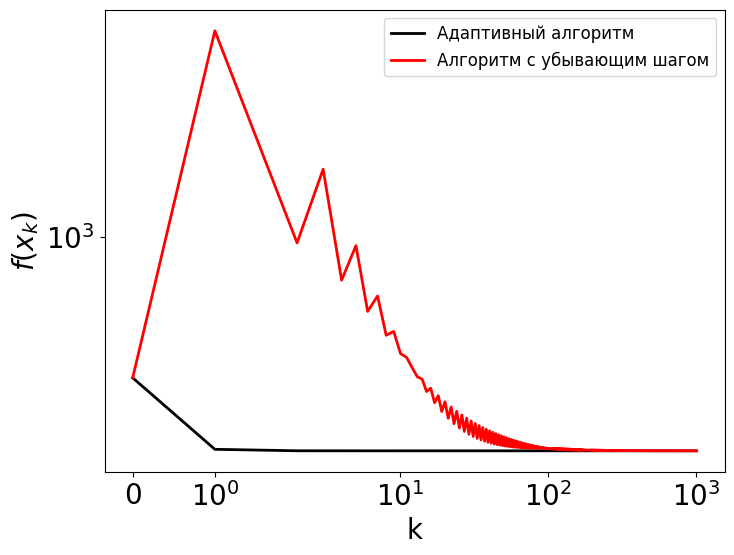}
    \includegraphics[width=4cm]{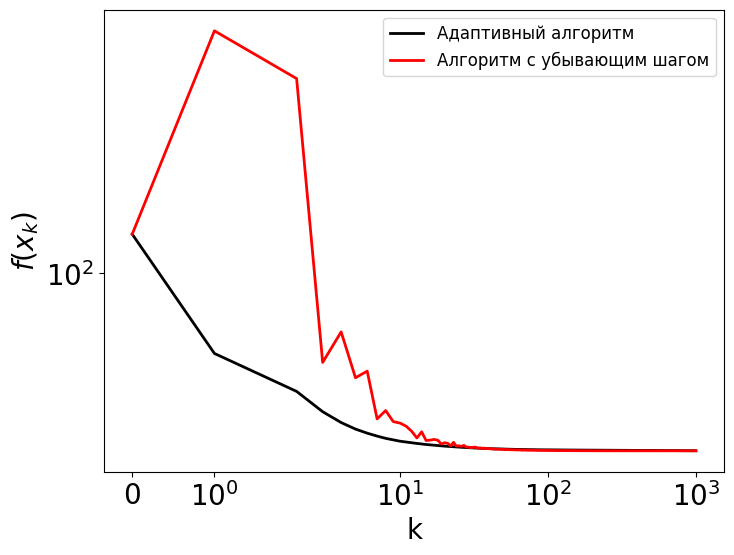}
    \includegraphics[width=4cm]{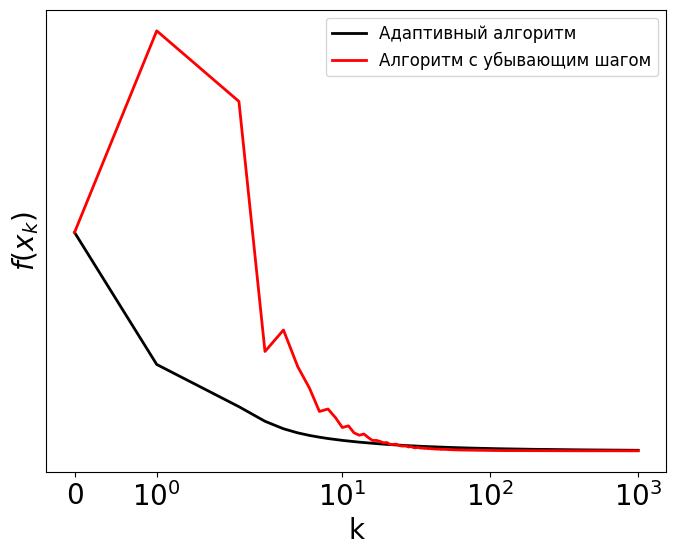}
    \caption{Результаты эксперимента для целевой функции \eqref{obj_FTS} на (сверху вниз) $\ell_1,\;\ell_2$ и $\ell_{\infty}$-шаре с $n=100$, %$\tau=10$
    {$r=10$}
    (слева) и $n=1000, r=500$ (справа).}
    \label{Fig1}
\end{figure}

2) Вторая задача естественно связана с хорошо известной задачей о наименьшем покрывающем шаре. Для этой задачи целевая функция имеет следующий вид:
\begin{equation}\label{smallest_cov_quad}
	f(x) = \max_{1 \leq k \leq N} \|x - A_k\|_2^2.
\end{equation}
Точки $A_k \in \mathbb{R}^n, \; k = 1, \ldots, N$, выбирались случайным образом с нормальным (гауссовским) распределением с центром $0$ и стандартным отклонением, равным $1$.
\begin{figure}[ht]
    \includegraphics[width=4cm]{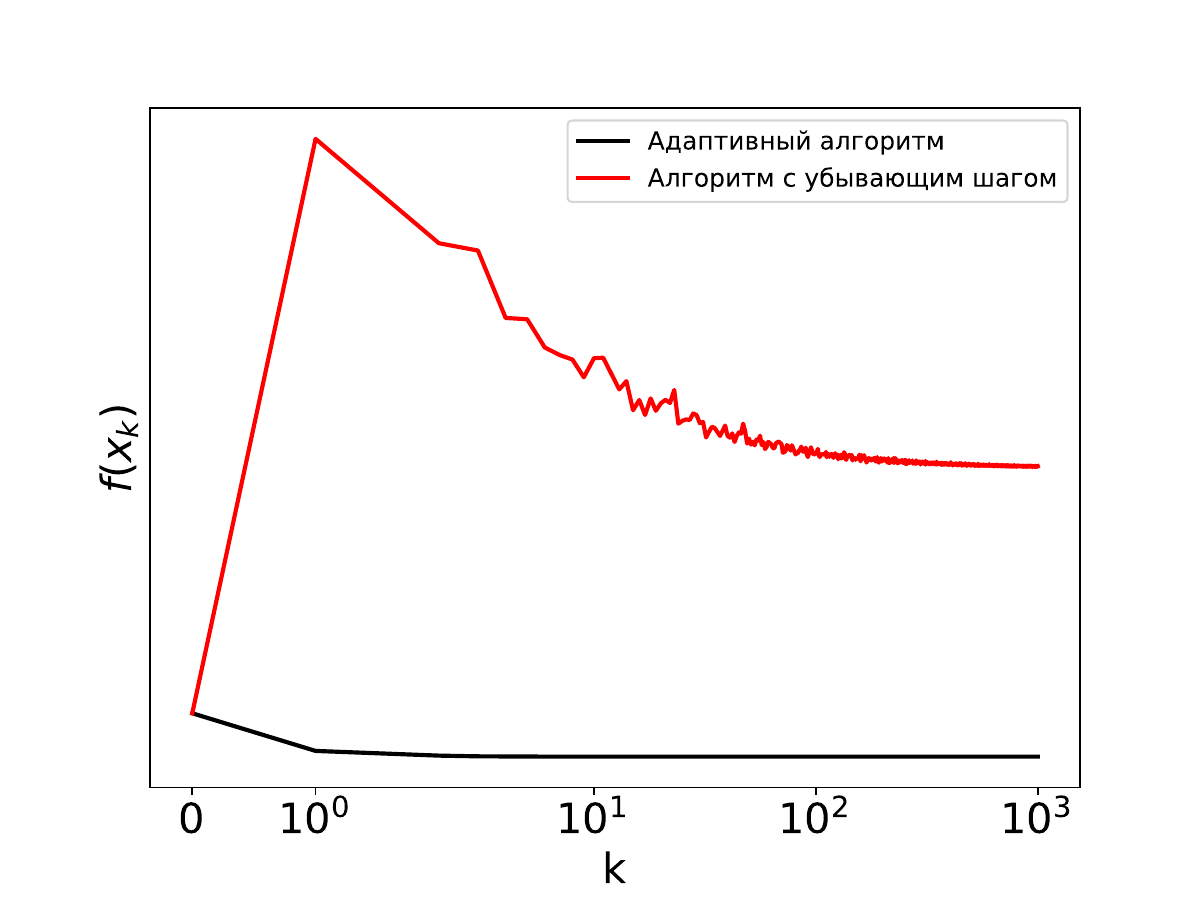}
    \includegraphics[width=4cm]{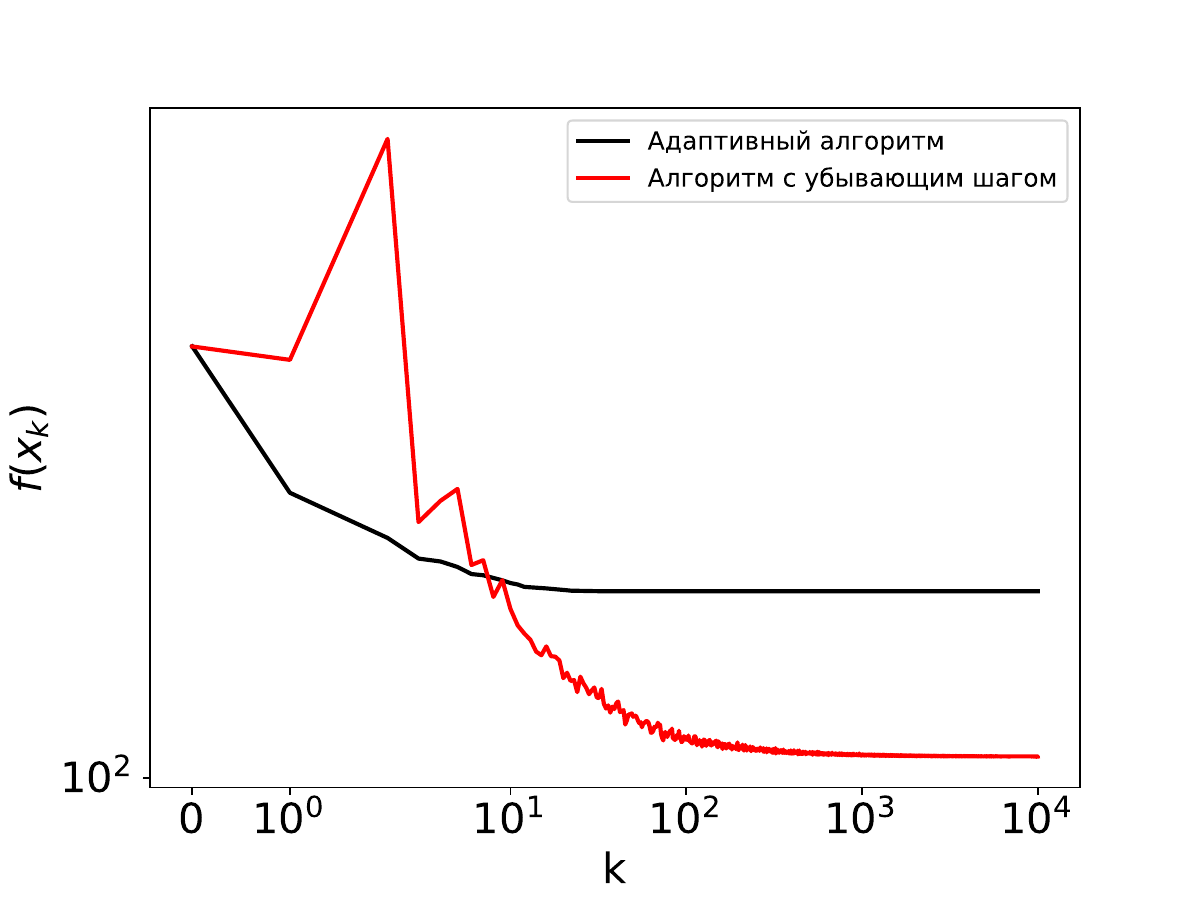}
    \centering
    \includegraphics[width=4.5cm]{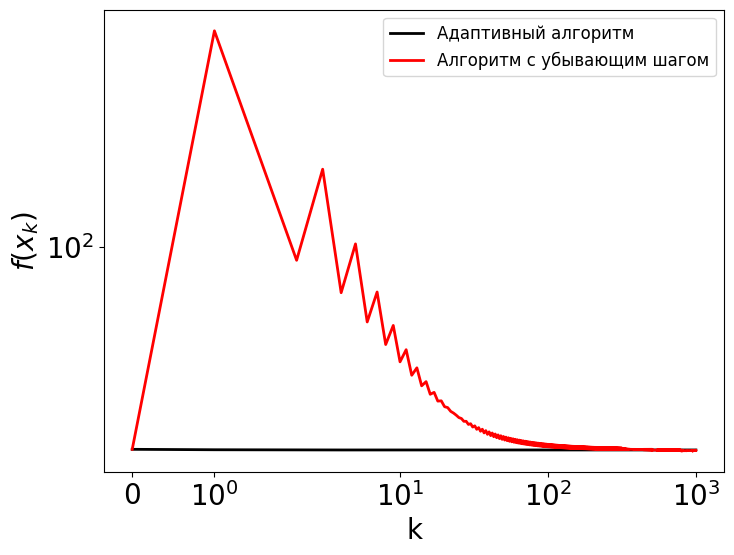}
    \caption{Результаты эксперимента для целевой функции \eqref{smallest_cov_quad} на  $\ell_2,\;\ell_{\infty}$-шаре (наверху слева направо) и $\ell_1$-шаре (внизу) с $n=1000, r=500$.}
    \label{Fig_smallest_covering}
\end{figure}
Для функции \eqref{smallest_cov_quad} на всех рассмотренных вариантах допустимых множеств адаптивный алгоритм на начальных итерациях сходится быстрее алгоритма с убывающим шагом, однако затем останавливается, так как для выхода из итерации константа $L_k$ становится настолько большой, что шаг $\alpha_k$ становится равным машинному нулю (Рис.~\ref{Fig_smallest_covering}).

3) Теперь для следующей гладкой функции
\begin{equation}\label{quad_function}
	f(x) = a_1 x_1^2 + \ldots a_n x_n^2,
\end{equation}
где $a_i >0 \; \forall i=1,\ldots, n$, мы увидим оценку \eqref{estimate_alg2}. Коэффициенты $a_1, \ldots, a_n$ выбраны равновероятно среди чисел $1, 2, \ldots, 10$. Результаты представлены на рисунке \ref{Fig_estimate}.
\begin{figure}[ht]
    \centering
    \includegraphics[width=7.0cm]{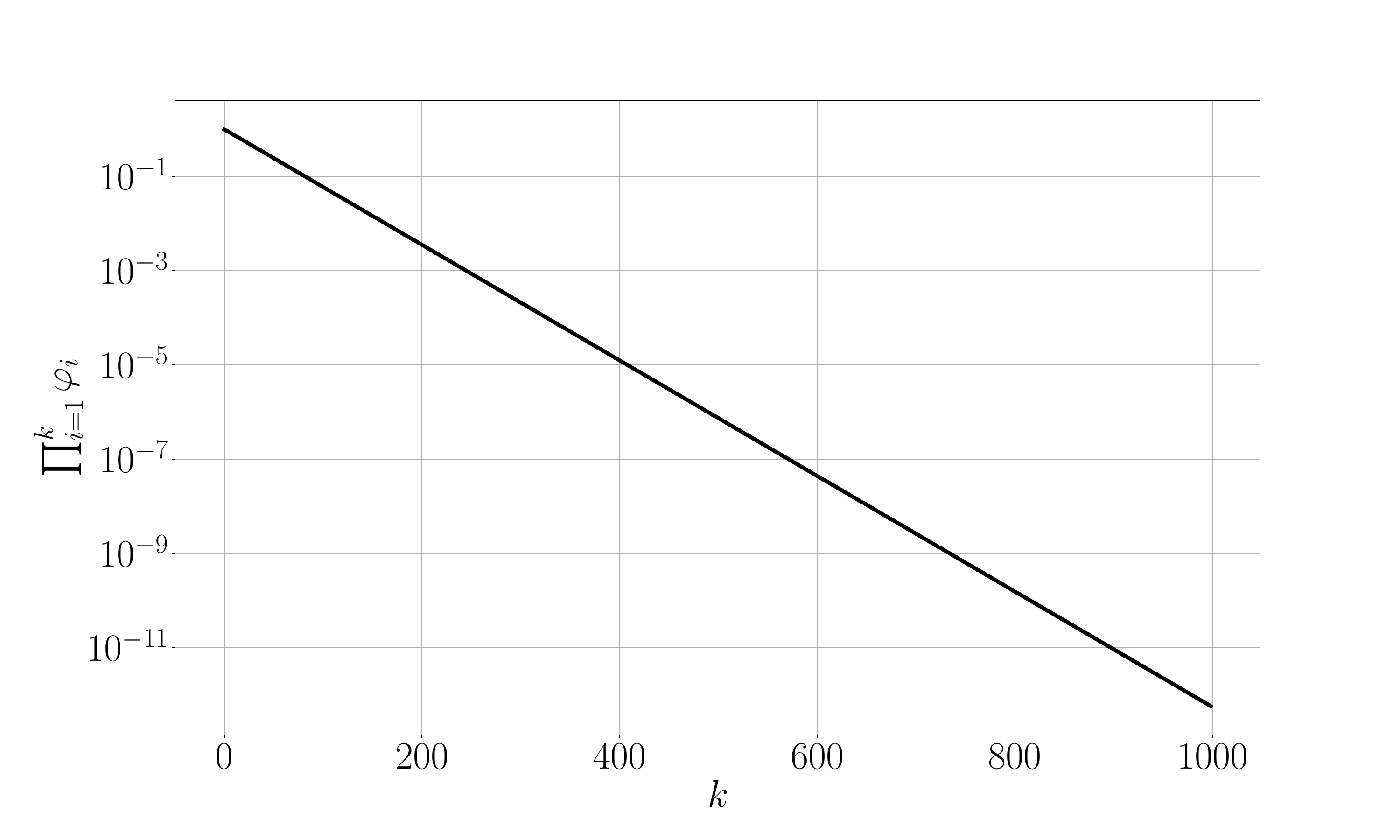}
    \caption{Оценка \eqref{estimate_alg2} для целевой функции \eqref{quad_function} на  $\ell_2$-шаре (единичный шар) с $n=1000$.}
    \label{Fig_estimate}
\end{figure}

4) Следующая задача заключается в восстановлении малоранговой матрицы из разреженного набора наблюдаемых матричных входов $\{U_{ij}\}_{(i,j)\in J}$, где $J \subseteq \{1, \ldots, n_1\} \times \{1, \ldots, n_2\}$. Таким образом, задача может быть сформулирована следующим образом:
\begin{equation}\label{matrix_completion}
\min_{X \in \mathbb{R}^{n_1 \times n_2}} f(X) := \sum\limits_{(i, j) \in J} (X_{ij} - U_{ij})^2,
\end{equation}
где
$$
\operatorname{rank}(X) \leq \delta.
$$
Здесь параметр $\delta > 0$ характеризует предположение о ранге восстанавливаемой матрицы. На практике для упрощения ограничение на ранг матрицы заменяется ограничением на ядерную норму матрицы, где ядерная норма $\|X\|_*$ матрицы $X$ равна сумме ее сингулярных значений. Таким образом, мы получаем следующую задачу выпуклой оптимизации:
\begin{equation}\label{matrix_completion1}
\min_{X \in \mathbb{R}^{n_1 \times n_2}} f(X) := \sum\limits_{(i, j) \in J} (X_{ij} - U_{ij})^2,
\end{equation}
где $\|X\|_* \leq \delta.$

Допустимое множество представляет собой выпуклую оболочку матриц единичного ранга
\begin{equation*}%\label{matrix_completion}
\begin{aligned}
&Q = \{X \in \mathbb{R}^{n_1 \times n_2}: \|X\|_* \leq \delta \}
\\&
=\operatorname{conv}\{\delta u v^\top: u \in  \mathbb{R}^{n_1},\;v \in \mathbb{R}^{n_2},\;\|u\| = \|v\| = 1\}.
\end{aligned}
\end{equation*}
Подробнее про задачу см. \cite{matrix_completion_ref}.

Эксперименты были проведены на сгенерированных данных, а также на датасете MovieLens, который представляет из себя матрицу оценок пользователей фильмов \cite{movielens100K}.

Искусственные данные были сгенерированы в соответствии с моделью $X = \omega_1 U V^\top + \omega_2 \mathcal{E}$, где элементы матриц $U \in \mathbb{R}^{m \times r}$, $V \in \mathbb{R}^{n \times r}$ и $\mathcal{E} \in \mathbb{R}^{m \times n}$ выбраны случайным образом с нормальным распределением с матожиданием $0$ и стандартным отклонением, равным $1$, а скалярные параметры $\omega_1$, $\omega_2$ отвечают за соотношение сигнала и шума (SNR), а именно: $\omega_1 := \frac{1}{\|U V^\top\|_F}$, а $\omega_2 := \frac{1}{(SNR \|\mathcal{E}\|_F)}$, где $\|\cdot\|_F$ обозначает норму Фробениуса матрицы, а именно: для матрицы $A$ размера $m \times n$ $\|A\|_F = \sqrt{\sum\limits_{i=1}^m \sum\limits_{i=1}^n |a_{ij}|^2}$. Множество наблюдаемых элементов $\Omega$ было получено с использованием равномерного случайного сэмплирования элементов с вероятностью $p$, где $p$~--- это желаемая доля наблюдаемых элементов. Для эксперимента было выбрано $m=250$, $n=200$.
Используемый датасет MovieLens состоит из 647 строк, соответствующих пользователям и 9300 столбцов, соответствующих фильмам, однако для ускорения вычислений были выбраны только первые 1500 столбцов. На описанной задаче для сгенерированных данных при различных значениях параметров адаптивный алгоритм \ref{adaptive_FW} продемонстрировал лучшую скорость сходимости, чем неадаптивный вариант с постоянным $L$, и показал примерно одинаковую эффективность с алгоритмом с убывающим шагом (Рис.~\ref{fig4}).
\begin{figure}[ht]
    \centering
    \includegraphics[width=4cm]{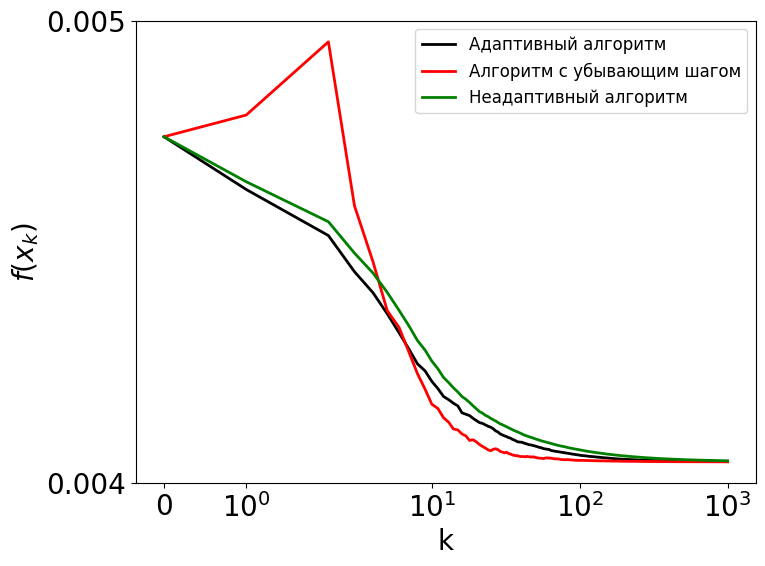}
     \includegraphics[width=4cm]{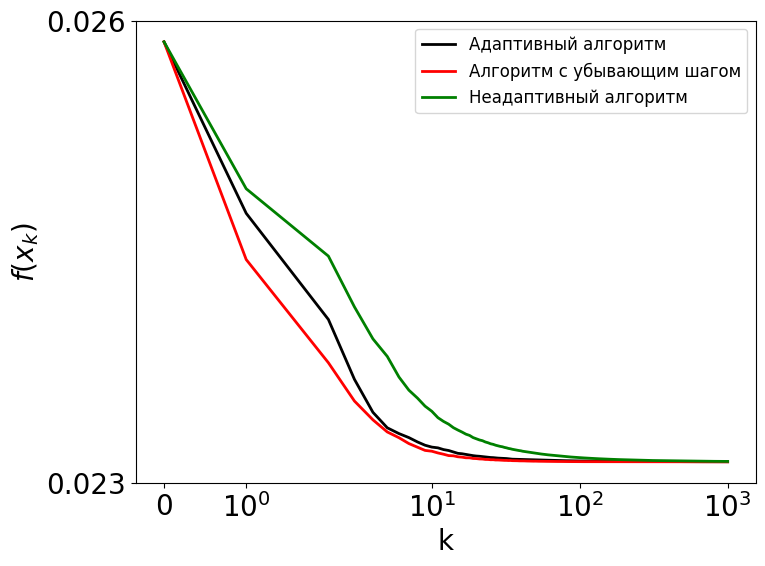}
    \includegraphics[width=4cm]{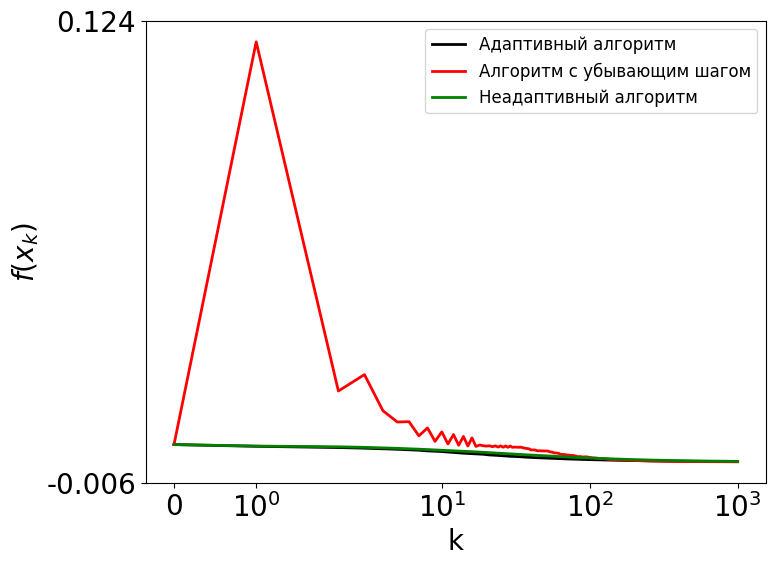}
    \includegraphics[width=4cm]{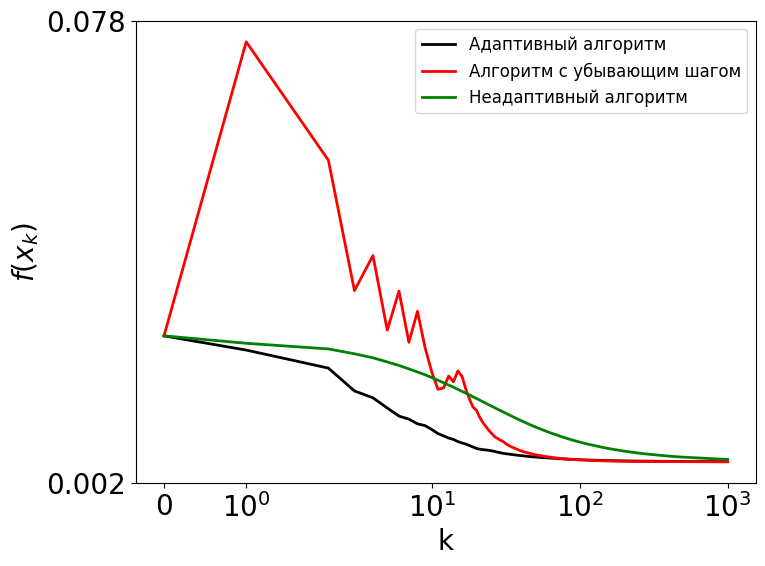}
    \caption{Результаты эксперимента для целевой функции \eqref{matrix_completion} с (слева направо) $d=0.08$, $r=10$, $p=0.01, SNR = 4$; $d=0.08$, $r=10$, $p=0.05$, $SNR = 4$; $d=0.8$, $r=3$, $p=0.01$, $SNR = 4$; $d=0.8$, $r=3$, $p=0.05$, $SNR = 4$.}
    \label{fig4}
\end{figure}
А на датасете с фильмами адаптивный алгоритм \ref{adaptive_FW} продемонстрировал лучшую динамику, чем неадаптивный алгоритм с убывающим шагом, и сошелся к значению $f \approx 28601.28$, в то время как алгоритм с убывающим шагом сошелся к значению $f \approx 28635.68$, а неадаптивный~--- к $f \approx 29604.09$ (Рис.~\ref{fig_5}).
\begin{figure}[ht]
    \centering
    \includegraphics[width=6cm]{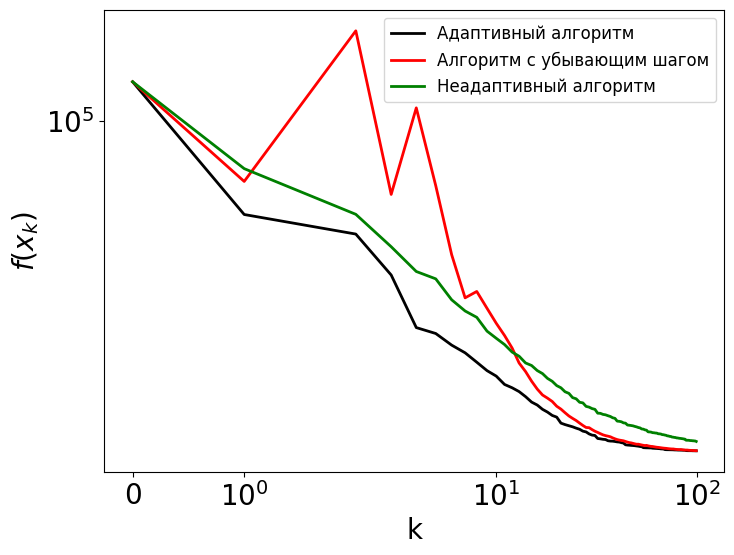}
    \caption{Результаты эксперимента для целевой функции \eqref{matrix_completion}.}
    \label{fig_5}
\end{figure}

5) Следующий пример связан с методом опорных векторов. Методы опорных векторов представляют собой важный класс инструментов машинного обучения (подробнее, например, тут \cite{SVM}). Дан размеченный набор точек, обычно называемый тренировочным набором,
\begin{equation}\label{TS}
    TS = \{(p_i, y_i), p_i \in \mathbb{R}^d, y_i \in \{-1, 1\}, i = 1, \ldots,n \}.
\end{equation}
Метод опорных векторов состоит в нахождении линейного классификатора $w \in \mathbb{R}^d$, такого, что метка $y_i$ может быть выведена с <<наибольшей уверенностью>> из $w^\top p_i$. Выпуклая квадратичная формулировка этой задачи следующая \cite{Clarkson_SVM}:
\begin{equation}\label{SVM_quadratic}
\min_{w \in \mathbb{R}^d, \text{ } \rho \in \mathbb{R}} \rho + \frac{||w||^2}{2},
\end{equation}
где
$$
\rho + y_i w^\top p_i \geq 0 \quad \forall i = 1, \ldots, n,
$$
и $\rho < 0$ может быть тогда и только тогда, когда существует точный линейный классификатор, то есть такой, что $w^\top p_i = \operatorname{sign}(y_i)$. Двойственная к \eqref{SVM_quadratic} снова является стандартной задачей квадратичного программирования:
\begin{equation}\label{SVM}
    \min_{x \in \triangle_{n}} x^\top A^\top A x,
\end{equation}
где $A = (y_1 p_1\; \dots\; y_n p_n)$, а $\triangle_{n}$~--- единичный $n$-мерный симплекс, определяемый как
$$
    \triangle_{n} := \left\{ x \in \mathbb{R}^n: 0 \leq x \leq 1 \text{ и } 1^\top x = 1 \right\}.
$$
В качестве данных для эксперимента использовался датасет из Национального института диабета, заболеваний органов пищеварения и почек, который представляет из себя набор некоторых медицинских измерений для каждого пациента и бинарную целевую переменную~--- информацию о наличии у пациента диабета \cite{diabet_dataset}. На этой задаче адаптивный алгоритм \ref{adaptive_FW} показал сyщественно лучшую динамику, чем два других алгоритма, и примерно за 500 итераций сошелся к значению $f \approx 0.01$, в то время как алгоритм с убывающим шагом за 100000 итераций сошелся лишь к значению $f \approx 0.146$, а неадаптивный алгоритм~--- к $f \approx 0.155$ (Рис.~\ref{fig_6}).
\begin{figure}[ht]
    \centering
    \includegraphics[width=5.5cm]{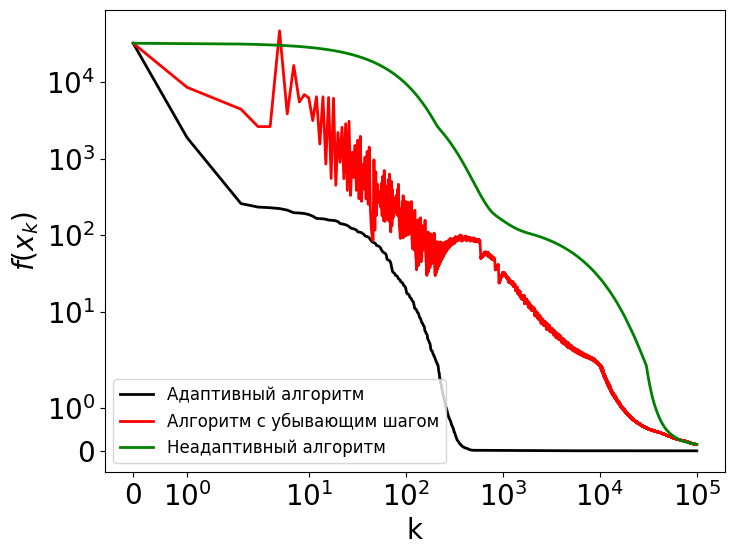}
    \caption{Результаты эксперимента для целевой функции \eqref{SVM}.}
    \label{fig_6}
\end{figure}

6) Наконец, рассмотрим задачу обучения логистической регрессии для задачи классификации. Ей соответствует целевая функция эмпирического риска с бинарной кросс-энтропией в качестве функции потерь:
\begin{align}\label{logreg}
\nonumber f(w) := -\frac{1}{m} \sum_{i=1}^m (y_i \ln \hat{y}_i(w) + \\
\nonumber  + (1 - y_i) \ln (1 - \hat{y}_i(w))),\\
\hat{y}_i(w) := \frac{1}{1 + \exp (-\langle X_i, w\rangle)},\quad i = 1, \ldots, m,
\end{align}
в которой $\hat{y}_i \in (0, 1]$ имеет смысл предсказания моделью логистической регрессии, параметризованной весами $w \in \mathbb{R}^n$, целевой переменной, $X_i \in \mathbb{R}^n$~--- вектор признакового описания $i$-го объекта набора данных, $y_i \in \{0, 1\}$~--- значение целевой переменной для $i$-го объекта, $m$~--- число объектов в наборе данных. В качестве дополнительного ограничения, накладываемого на $w$, требуется выполнение условия $w \in B_2(r)$, $r > 0$, которое называется регуляризацией Иванова и эквивалентно $\ell_2$-регуляризации Тихонова \cite{ivanov}. В качестве набора данных для этого эксперимента использовался a1a из библиотеки LibSVM \cite{libsvm}.
\begin{figure}[ht!]
    \centering
    \includegraphics[width=4cm]{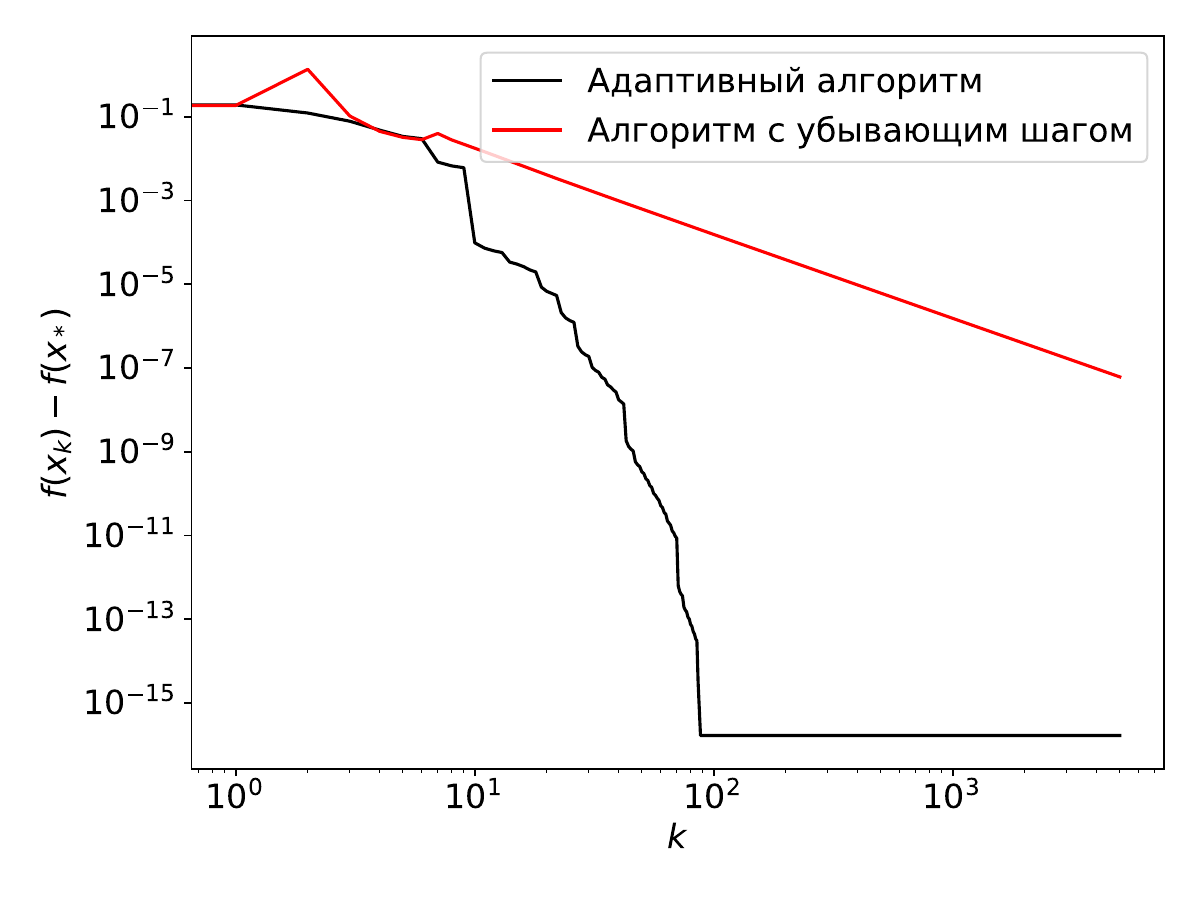}
    \includegraphics[width=4cm]{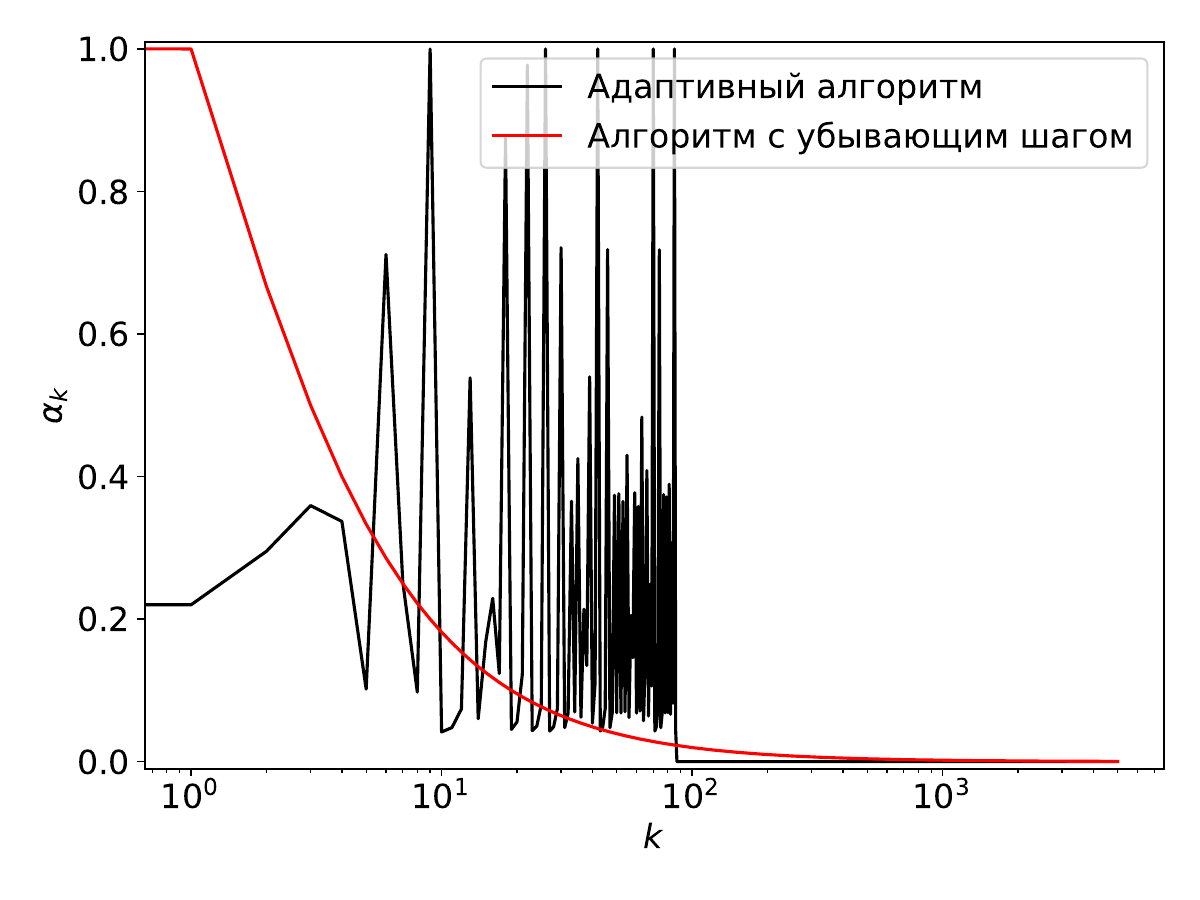}
    \includegraphics[width=4cm]{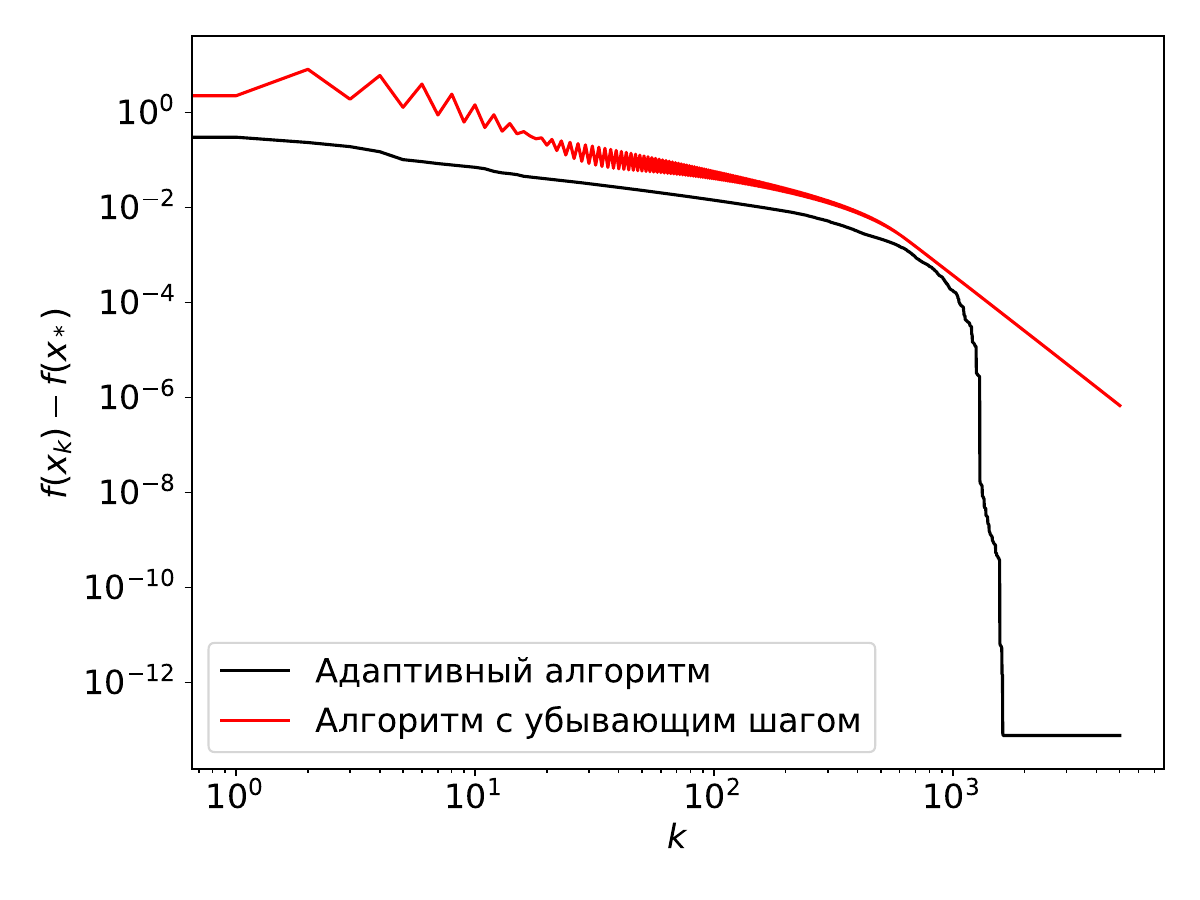}
    \includegraphics[width=4cm]{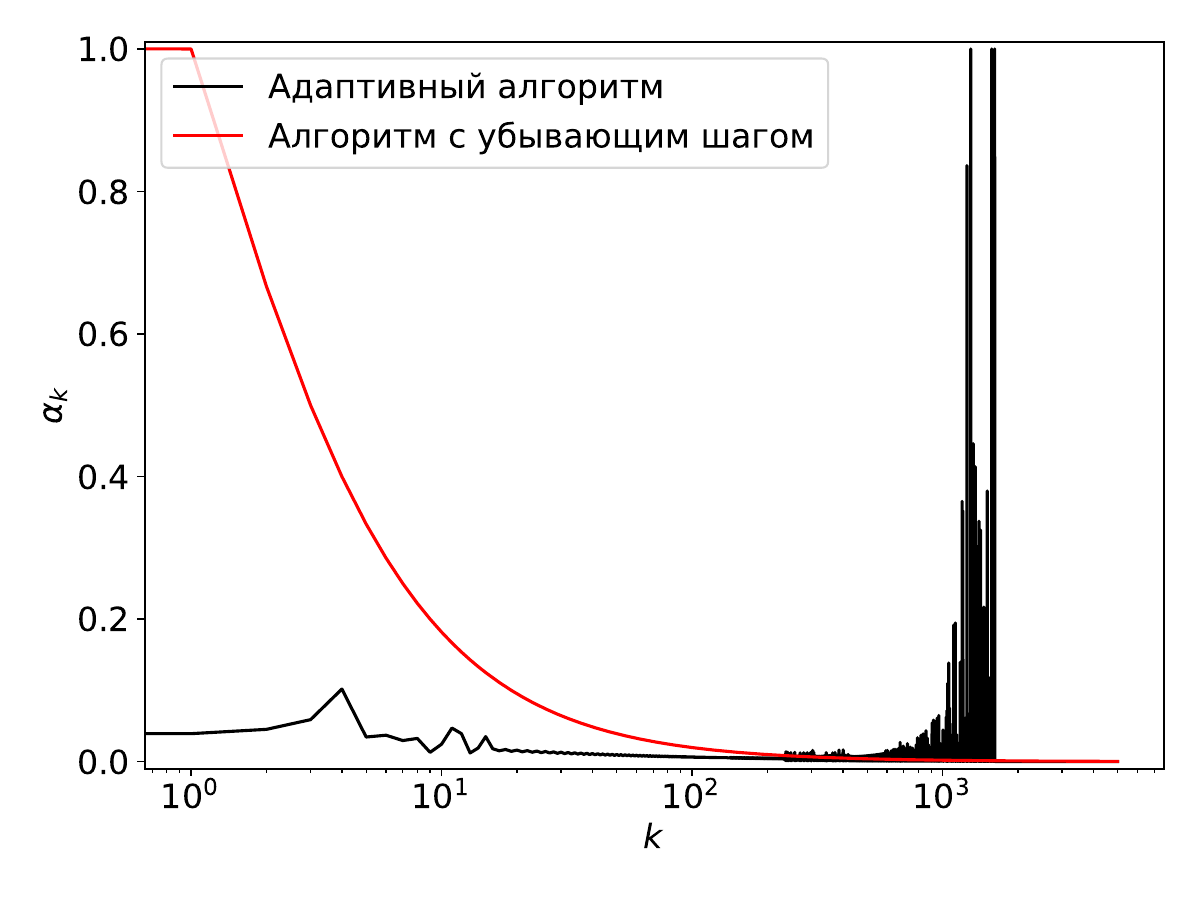}
    \caption{Результаты эксперимента для целевой функции \eqref{logreg} на $\ell_2$-шаре с $n=119$, $m=1605$, $r=1$ (сверху) и $r=5$ (снизу).}
    \label{fig:logreg}
\end{figure}

Для этой задачи адаптивный алгоритм \ref{adaptive_FW} позволяет достигнуть решения задачи с точностью до неустранимой машинной погрешности за существенно меньшее число итераций, чем неадаптивный алгоритм (левая часть рис.~\ref{fig:logreg}). Как можно видеть из графика адаптивно выбираемых значений параметра $\alpha_i$ (правая часть рис.~\ref{fig:logreg}), чем ближе текущая точка к решению задачи, тем чаще удаётся установить $\alpha_i$ равным $\frac{1}{2}$, вследствие чего гарантировать уменьшение невязки по функции вдвое при соответствующем шаге и ускорить приближение к решению. Свобода выбрать более эффективный коэффициент $\alpha_i$ на $i$-той итераций появляется благодаря адаптивности алгоритма~\ref{adaptive_FW}.

\section*{Заключение}
В статье исследован адаптивный вариант метода Франк--Вульфа для задач выпуклой минимизации. Его сходимость была обоснована с точки зрения адаптивно подбираемых параметров, соответствующих константе Липшица градиента. Детально проанализирован случай, когда на итерациях алгоритма возможно теоретически гарантировать уменьшение невязки не менее чем в два раза. Для выпуклых гладких задач возможно доказать гарантии сходимости метода с оптимальной cублинейной скоростью. Сходимость метода со скоростью геометрической прогрессии для метода Франк--Вульфа возможна, как известно, при дополнительных предположениях о целевой функции или допустимом множестве. Соответственно, в статье получены также оценки скорости сходимости предлагаемого алгоритма с адаптивно подбираемыми параметрами $L_k$ в ситуации, когда помимо выпуклости предполагается, что целевая функция удовлетворяет условию градиентного доминирования.

Проведённые эксперименты показали, что адаптивный алгоритм \ref{adaptive_FW} может приводить к лучшим результатам по сравнению со стандартным шагом, зависящим от константы Липшица градиента целевой функции для рассмотренных типов выпуклых гладких задач. Из результатов экспериментов также видно, что зачастую адаптивный алгоритм не уступает в эффективности классическому алгоритму с убывающим шагом, а часто работает намного эффективнее него, причём как для гладких, так и для негладких задач. Так полyчилось, например, для задачи Ферма--Торричелли--Штейнера, являющейся негладкой, для $\ell_1$-шара с $n=1000$, $r=500$ и $\ell_2$-шара с $n=1000$, $r=500$ и $n=100$, $r=10$. А в некоторых случаях, как, например, в задачах, связанных с методом опорных векторов или логистической регрессией, которые являются гладкими, адаптивный алгоритм сходится на порядок быстрее классического алгоритма.

\section*{Финансирование} Работа выполнена при поддержке гранта Российского научного фонда, проект № 21-71-30005, \url{https://rscf.ru/project/21-71-30005/}.

\end{document}